\newcommand{\gen}[1]{\left< \mathinner{#1} \right>}
\newcommand{\ggen}[1]{\left<\left< \mathinner{#1} \right>\right>}
\numberwithin{equation}{section}
\newtheorem{theoremx}{Theorem}
\newcommand{\oldV}{V_{\text{old}}}
\newcommand{\newV}{V_{\text{new}}}
\newcommand{\GamSlex}[1]{#1-slex}
 \renewcommand{\leq}{\leqslant}
\newcommand{\bi}{\begin{itemize}}
\newcommand{\ei}{\end{itemize}}
\newcommand{\be}{\begin{enumerate}}
\newcommand{\ee}{\end{enumerate}}
\newtheorem{theorem}{Theorem}[section]
\newtheorem{proposition}[theorem]{Proposition}
\newtheorem{lemma}[theorem]{Lemma}
\theoremstyle{definition}
\newtheorem{definition}[theorem]{Definition}
\newtheorem{remark}[theorem]{Remark}
\newtheorem{example}[theorem]{Example}
\begin{document}

\title[New constructions of free products and  geodetic Cayley graphs]{New constructions of free products \\
and geodetic Cayley graphs}\thanks{Research supported by Australian Research Council grant DP210100271.}

\author[J. Abraham]{Joshua Abraham}\address{Department of Mathematics, Statistics, and Computer Science, University of Illinois Chicago, Chicago, IL 60607-7045, USA}\email{jjerr@uic.edu}

\author[M. Elder]{Murray Elder}\address{School of Mathematical and Physical Sciences, University of Technology Sydney, Broadway NSW 2007, Australia}\email{murray.elder@uts.edu.au}

\author[A. Piggott]{Adam Piggott}\address{Mathematical Sciences Institute, Australian National University, Canberra ACT  2601, Australia}
\email{adam.piggott@anu.edu.au}

\author[K. Townsend]{Kane Townsend}\address{School of Mathematical and Physical Sciences, University of Technology Sydney, Broadway NSW 2007, Australia}\email{kane.townsend@uts.edu.au}

\date{\today}

\subjclass{
05C12,
05C25,
05C76,
20F65, 
20E06}
\keywords{geodetic graph, geodetic group, rewriting system, free product, plain group, graph subdivision}

\begin{abstract}A connected graph is called \emph{geodetic} if there is a unique shortest path between each pair of vertices.
We introduce a systematic method for constructing new presentations of free products that give rise to previously unknown geodetic Cayley graphs. Our approach adapts subdivision techniques of Parthasarathy and Srinivasan (J. Combin. Theory Ser. B, 1982), which preserve geodecity at the graph level, to the setting of group presentations and rewriting systems. 
Specifically, given a group $G$ with geodetic Cayley graph with respect to generating set $\Sigma$ and an integer $n$, our construction produces a rewriting system presenting 
the free product of $G$ with a free group of rank $n|\Sigma|$
with geodetic Cayley graph with respect to  
a
new generating set. This framework provides new infinite families of geodetic Cayley graphs and extends the toolkit for investigating long-standing conjectures on geodetic groups.

\end{abstract}

\maketitle

\section{Introduction}
 A connected graph is called \emph{geodetic} if there is a unique shortest path between each pair of vertices.
 The study of geodetic graphs is motivated by applications in network design and parallel computing, where uniqueness of shortest paths in a graph provides efficiency in routing algorithms (see for example \cite{Leighton1992,YANG200773}) and non-trivial 2-blocks allow for localised fault-tolerance. 
Algebraically, a 
 finitely generated group is called \emph{geodetic} if it admits a finite inverse-closed generating set  $\Sigma$ for which the associated Cayley graph is geodetic, in which case we call $\Sigma$  a \emph{geodetic generating set}.

If a group has a geodetic generating set, then it admits a  confluent length-reducing rewriting system with respect to that generating set.
Madlener and Otto \cite{MadlenerOttoLengthReducing} conjectured that a group admits a finite confluent length-reducing rewriting system if and only if it is plain (that is, a free product of finitely many finite groups and a finitely generated free group). Ten years later Shapiro \cite{shapiro1997pascal} conjectured that a finitely generated group is geodetic if and only if it is plain. Recent progress by 
several authors 
\cite{EisenbergP19,ElderP22,ElderP23,Elder07052025,AdamMonadic} has attempted to clarify the 
connection between rewriting systems and geodetic Cayley graphs, with a goal to advance both conjectures.

Federici \cite{wrap108882} conjectured that the only finite geodetic Cayley graphs are complete graphs and, in the special case of cyclic groups of odd order, an odd cycle.
    This conjecture has been confirmed in some special infinite families, and  checked by computer for all groups up to order $1024$ in \cite[Theorems B and C]{Elder07052025}. 

In this paper we investigate systematic methods for constructing new geodetic Cayley graphs from existing ones. At the graph-theoretic level, it is known that if $\Gamma$ is a geodetic graph then replacing every edge of $\Gamma$ by a path of fixed odd length yields another geodetic graph \cite{parthasarathy1982some}.
Inspired by this, we introduce a construction that takes as input a group $G$ with finite generating inverse-closed set $\Sigma$ and outputs a rewriting system $\nabla_n(G,\Sigma)$ corresponding to a subdivision of the Cayley graph. We show that $\nabla_n(G,\Sigma)$ presents 
the free product of $G$ with a free group of rank $n|\Sigma|$,  which we denote by $G \ast F_{n|\Sigma|}$,
and that geodeticity of the generating set is preserved under this transformation.

Our main results are the following. First, we prove that the construction $\nabla_n(G,\Sigma)$ yields a confluent, inverse-closed rewriting system that presents $G \ast F_{n|\Sigma|}$ (\cref{thm:construction}). Second, we prove that $\Sigma$ is a geodetic generating set for $G$ if and only if the expanded generating set $\Sigma(n)$ is geodetic for the constructed free product (\cref{thm:constructiongeodeticiffgeodetic}). This provides a general framework for building new families of geodetic Cayley graphs 
from known ones.

\subsection{Prelude: known ways to construct geodetic Cayley graphs}

See \cref{subsec:grouppresentations} for definitions of group presentation, Cayley graph and geodetic generating set used in this section.

\begin{example}\label{eg:shortcutinfreegroups}
Let $X$ be a finite set with $|X|=k$. We denote the \emph{free group on $X$} by $F(X)$, with $X$ being a \emph{free basis} for $F(X)$. The \emph{rank} of $F(X)$ is $k$. The group presentation of $F(X)$ is given by $\langle X\mid -\rangle$. We also denote $F(X)$ by $F_k$ when we do not wish to explicitly reference the free basis $X$. A free group is a geodetic group: $F(X)$ has geodetic generating set given by $\Sigma=X\cup X^{-1}$. Other geodetic generating sets exist. Let $X=\{x,y\}$, then both $\Sigma=\{\textcolor{red}{x}^{\pm 1},\textcolor{blue}{y}^{\pm 1}\}$ and $\Omega=\{\textcolor{red}{x}^{\pm 1},\textcolor{blue}{y}^{\pm 1},(\textcolor{ForestGreen}{xy})^{\pm 1}\}$ are geodetic generating sets for $F(X)$. See a comparison of the Cayley graphs of $F(X)$ with generating sets $\Sigma$ and $\Omega$  in \cref{fig:f2}.

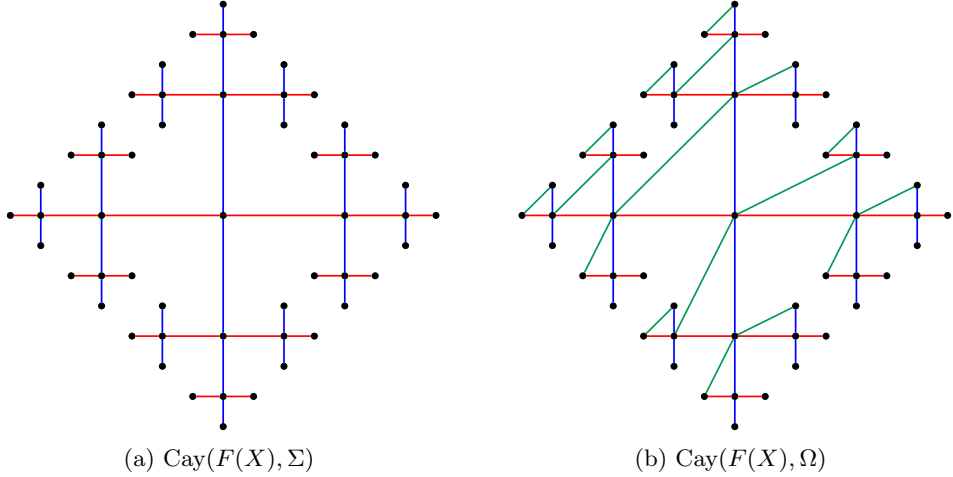
\begin{figure}[ht]
\begin{center}

 \begin{subfigure}{0.45\textwidth}
 \begin{center}
	\scalebox{0.8}{\begin{tikzpicture}

	\node[circle, draw, fill=black,inner sep=1pt] (0_0) at (0,0) {};
        	\node[circle, draw, fill=black,inner sep=1pt] (2_0) at (2,0) {};
         	\node[circle, draw, fill=black,inner sep=1pt] (0_2) at (0,2) {};
         	\node[circle, draw, fill=black,inner sep=1pt] (-2_0) at (-2,0) {};
         	\node[circle, draw, fill=black,inner sep=1pt] (0_-2) at (0,-2) {};
\draw[thick,red] (0_0) --  (2_0);
\draw[thick,red] (-2_0) --  (0_0);
\draw[thick,blue] (0_0) --  (0_2);
\draw[thick,blue] (0_-2) --  (0_0);

 	\node[circle, draw, fill=black,inner sep=1pt]  (2_-1) at (2,-1) {};
	\node[circle, draw, fill=black,inner sep=1pt] (2_1) at (2,1) {};
     	\node[circle, draw, fill=black,inner sep=1pt] (3_0) at (3,0) {};
 \draw[thick,red] (2_0) --  (3_0);
\draw[thick,blue] (2_0) --  (2_1);
\draw[thick,blue] (2_-1) --  (2_0);
        
	 \node[circle, draw, fill=black,inner sep=1pt] (-2_-1) at (-2,-1) {};
      	\node[circle, draw, fill=black,inner sep=1pt]  (-2_1) at (-2,1) {};
    \node[circle, draw, fill=black,inner sep=1pt]  (-3_0) at (-3,0) {};
        \draw[thick,red] (-3_0) --  (-2_0);
\draw[thick,blue] (-2_0) --  (-2_1);
\draw[thick,blue] (-2_-1) --  (-2_0);
 
   	\node[circle, draw, fill=black,inner sep=1pt]  (-1_2) at (-1,2) {};
      	\node[circle, draw, fill=black,inner sep=1pt] (1_2) at (1,2) {};
           	\node[circle, draw, fill=black,inner sep=1pt] (0_3) at (0,3) {};
     \draw[thick,blue] (0_2) --  (0_3);
\draw[thick,red] (0_2) --  (1_2);
\draw[thick,red] (-1_2) --  (0_2);
     
\node[circle, draw, fill=black,inner sep=1pt] (-1_-2) at (-1,-2) {};
     	\node[circle, draw, fill=black,inner sep=1pt]	 (1_-2) at (1,-2) {};
  	\node[circle, draw, fill=black,inner sep=1pt] (0_-3) at (0,-3) {};
    \draw[thick,blue] (0_-3) --  (0_-2);
\draw[thick,red] (0_-2) --  (1_-2);
\draw[thick,red] (-1_-2) --  (0_-2);

    \node[circle, draw, fill=black,inner sep=1pt] (3_-0point5) at (3,-0.5) {};
        \node[circle, draw, fill=black,inner sep=1pt] (3_0point5) at (3,0.5) {};
     \node[circle, draw, fill=black,inner sep=1pt] (3point_0) at (3.5,0) {};
     
         \draw[thick,red] (3_0) --  (3point_0);
\draw[thick,blue] (3_0) --  (3_0point5);
\draw[thick,blue] (3_-0point5) --  (3_0);
    
\node[circle, draw, fill=black,inner sep=1pt] (-3_-0point5) at (-3,-0.5) {};
    \node[circle, draw, fill=black,inner sep=1pt] (-3_0point5) at (-3,0.5) {};
\node[circle, draw, fill=black,inner sep=1pt] (-3point5_0) at (-3.5,0) {};

\draw[thick,red] (-3point5_0) --  (-3_0);
\draw[thick,blue] (-3_0) --  (-3_0point5);
\draw[thick,blue] (-3_-0point5) --  (-3_0);

 \node[circle, draw, fill=black,inner sep=1pt] (1point5_2) at (1.5,2) {};        \node[circle, draw, fill=black,inner sep=1pt]	(1_2point5) at (1,2.5) {};
     \node[circle, draw, fill=black,inner sep=1pt] (1_1point5) at (1,1.5) {};
    
      \draw[thick,red] (1_2) --  (1point5_2);
\draw[thick,blue] (1_2) --  (1_2point5);
\draw[thick,blue] (1_1point5) -- (1_2);
        	
\node[circle, draw, fill=black,inner sep=1pt]	 (1point5_-2) at (1.5,-2) {};
 \node[circle, draw, fill=black,inner sep=1pt] (1_-2point5) at (1,-2.5) {};
 \node[circle, draw, fill=black,inner sep=1pt]	(1_-1point5) at (1,-1.5) {};
       \draw[thick,red] (1_-2) --  (1point5_-2);
\draw[thick,blue] (1_-2point5) --  (1_-2);
\draw[thick,blue] (1_-2) -- (1_-1point5);
	
\node[circle, draw, fill=black,inner sep=1pt]  (-1point5_-2) at (-1.5,-2) {};
\node[circle, draw, fill=black,inner sep=1pt] (-1_-2point5) at (-1,-2.5) {};
 \node[circle, draw, fill=black,inner sep=1pt] (-1_-1point5) at (-1,-1.5) {};
     \draw[thick,red] (-1point5_-2) --  (-1_-2);
\draw[thick,blue] (-1_-2point5) --  (-1_-2);
\draw[thick,blue] (-1_-2) -- (-1_-1point5);

\node[circle, draw, fill=black,inner sep=1pt] (-1point5_2) at (-1.5,2) {};
\node[circle, draw, fill=black,inner sep=1pt] (-1_2point5) at (-1,2.5) {};
\node[circle, draw, fill=black,inner sep=1pt] (-1_1point5) at (-1,1.5) {};
 \draw[thick,red] (-1point5_2) --  (-1_2);
\draw[thick,blue] (-1_2point5) --  (-1_2);
\draw[thick,blue] (-1_2) -- (-1_1point5);

\node[circle, draw, fill=black,inner sep=1pt]	 (0_-3point5) at (0,-3.5) {};
\node[circle, draw, fill=black,inner sep=1pt] (0point5_-3) at (0.5,-3) {};
\node[circle, draw, fill=black,inner sep=1pt] (-0point5_-3) at (-0.5,-3) {};
\draw[thick,blue] (0_-3point5) --  (0_-3);
\draw[thick,red] (0_-3) --  (0point5_-3);
\draw[thick,red] (-0point5_-3) --  (0_-3);

\node[circle, draw, fill=black,inner sep=1pt] (0_3point5) at (0,3.5) {};
\node[circle, draw, fill=black,inner sep=1pt] (0point5_3) at (0.5,3) {};
\node[circle, draw, fill=black,inner sep=1pt] (-0point5_3) at (-0.5,3) {};
          \draw[thick,blue] (0_3point5) --  (0_3);
\draw[thick,red] (0_3) --  (0point5_3);
\draw[thick,red] (-0point5_3) --  (0_3);

\node[circle, draw, fill=black,inner sep=1pt]	 (1point5_1) at (1.5,1) {};
\node[circle, draw, fill=black,inner sep=1pt]	 (2point5_1) at (2.5,1) {};
\node[circle, draw, fill=black,inner sep=1pt] (2_1point5) at (2,1.5) {};
      \draw[thick,blue] (2_1) --  (2_1point5);
\draw[thick,red] (2_1) --  (2point5_1);
\draw[thick,red] (1point5_1) --  (2_1);

\node[circle, draw, fill=black,inner sep=1pt] (-1point5_1) at (-1.5,1) {};
\node[circle, draw, fill=black,inner sep=1pt] (-2point5_1) at (-2.5,1) {};
\node[circle, draw, fill=black,inner sep=1pt] (-2_1point5) at (-2,1.5) {};
 \draw[thick,blue] (-2_1) --  (-2_1point5);
\draw[thick,red] (-2point5_1) --  (-2_1);
\draw[thick,red] (-2_1) --  (-1point5_1);

\node[circle, draw, fill=black,inner sep=1pt]  (2_-1point5) at (2,-1.5) {};
\node[circle, draw, fill=black,inner sep=1pt] (2point5_-1) at (2.5,-1) {};
\node[circle, draw, fill=black,inner sep=1pt] (1point5_-1) at (1.5,-1) {};
          \draw[thick,blue] (2_-1) --  (2_-1point5);
\draw[thick,red] (2_-1) --  (2point5_-1);
\draw[thick,red] (1point5_-1) --  (2_-1);

\node[circle, draw, fill=black,inner sep=1pt] (-2_-1point5) at (-2,-1.5) {};
\node[circle, draw, fill=black,inner sep=1pt] (-2point5_-1) at (-2.5,-1) {};
\node[circle, draw, fill=black,inner sep=1pt] (-1point5_-1) at (-1.5,-1) {};
          \draw[thick,blue] (-2_-1) --  (-2_-1point5);
\draw[thick,red] (-2_-1) --  (-2point5_-1);
\draw[thick,red] (-1point5_-1) --  (-2_-1);

			\end{tikzpicture}}
\caption{$\text{Cay}(F(X),\Sigma)$
\label{fig:f2freebasis}}
\end{center}
\end{subfigure}
      \begin{subfigure}{0.45\textwidth}
      \begin{center}
\scalebox{0.8}{\begin{tikzpicture}

	\node[circle, draw, fill=black,inner sep=1pt] (0_0) at (0,0) {};
        	\node[circle, draw, fill=black,inner sep=1pt] (2_0) at (2,0) {};
         	\node[circle, draw, fill=black,inner sep=1pt] (0_2) at (0,2) {};
         	\node[circle, draw, fill=black,inner sep=1pt] (-2_0) at (-2,0) {};
         	\node[circle, draw, fill=black,inner sep=1pt] (0_-2) at (0,-2) {};
\draw[thick,red] (0_0) --  (2_0);
\draw[thick,red] (-2_0) --  (0_0);
\draw[thick,blue] (0_0) --  (0_2);
\draw[thick,blue] (0_-2) --  (0_0);

 	\node[circle, draw, fill=black,inner sep=1pt]  (2_-1) at (2,-1) {};
	\node[circle, draw, fill=black,inner sep=1pt] (2_1) at (2,1) {};
     	\node[circle, draw, fill=black,inner sep=1pt] (3_0) at (3,0) {};
 \draw[thick,red] (2_0) --  (3_0);
\draw[thick,blue] (2_0) --  (2_1);
\draw[thick,blue] (2_-1) --  (2_0);
        
	 \node[circle, draw, fill=black,inner sep=1pt] (-2_-1) at (-2,-1) {};
      	\node[circle, draw, fill=black,inner sep=1pt]  (-2_1) at (-2,1) {};
    \node[circle, draw, fill=black,inner sep=1pt]  (-3_0) at (-3,0) {};
        \draw[thick,red] (-3_0) --  (-2_0);
\draw[thick,blue] (-2_0) --  (-2_1);
\draw[thick,blue] (-2_-1) --  (-2_0);
 
   	\node[circle, draw, fill=black,inner sep=1pt]  (-1_2) at (-1,2) {};
      	\node[circle, draw, fill=black,inner sep=1pt] (1_2) at (1,2) {};
           	\node[circle, draw, fill=black,inner sep=1pt] (0_3) at (0,3) {};
     \draw[thick,blue] (0_2) --  (0_3);
\draw[thick,red] (0_2) --  (1_2);
\draw[thick,red] (-1_2) --  (0_2);
     
\node[circle, draw, fill=black,inner sep=1pt] (-1_-2) at (-1_-2) {};
     	\node[circle, draw, fill=black,inner sep=1pt]	 (1_-2) at (1,-2) {};
  	\node[circle, draw, fill=black,inner sep=1pt] (0_-3) at (0,-3) {};
    \draw[thick,blue] (0_-3) --  (0_-2);
\draw[thick,red] (0_-2) --  (1_-2);
\draw[thick,red] (-1_-2) --  (0_-2);

    \node[circle, draw, fill=black,inner sep=1pt] (3_-0point5) at (3,-0.5) {};
        \node[circle, draw, fill=black,inner sep=1pt] (3_0point5) at (3,0.5) {};
     \node[circle, draw, fill=black,inner sep=1pt] (3point_0) at (3.5,0) {};
     
         \draw[thick,red] (3_0) --  (3point_0);
\draw[thick,blue] (3_0) --  (3_0point5);
\draw[thick,blue] (3_-0point5) --  (3_0);
    
\node[circle, draw, fill=black,inner sep=1pt] (-3_-0point5) at (-3,-0.5) {};
    \node[circle, draw, fill=black,inner sep=1pt] (-3_0point5) at (-3,0.5) {};
\node[circle, draw, fill=black,inner sep=1pt] (-3point5_0) at (-3.5,0) {};

\draw[thick,red] (-3point5_0) --  (-3_0);
\draw[thick,blue] (-3_0) --  (-3_0point5);
\draw[thick,blue] (-3_-0point5) --  (-3_0);

 \node[circle, draw, fill=black,inner sep=1pt] (1point5_2) at (1.5,2) {};        \node[circle, draw, fill=black,inner sep=1pt]	(1_2point5) at (1,2.5) {};
     \node[circle, draw, fill=black,inner sep=1pt] (1_1point5) at (1,1.5) {};
    
      \draw[thick,red] (1_2) --  (1point5_2);
\draw[thick,blue] (1_2) --  (1_2point5);
\draw[thick,blue] (1_1point5) -- (1_2);
        	
\node[circle, draw, fill=black,inner sep=1pt]	 (1point5_-2) at (1.5,-2) {};
 \node[circle, draw, fill=black,inner sep=1pt] (1_-2point5) at (1,-2.5) {};
 \node[circle, draw, fill=black,inner sep=1pt]	(1_-1point5) at (1,-1.5) {};
       \draw[thick,red] (1_-2) --  (1point5_-2);
\draw[thick,blue] (1_-2point5) --  (1_-2);
\draw[thick,blue] (1_-2) -- (1_-1point5);
	
\node[circle, draw, fill=black,inner sep=1pt]  (-1point5_-2) at (-1.5,-2) {};
\node[circle, draw, fill=black,inner sep=1pt] (-1_-2point5) at (-1,-2.5) {};
 \node[circle, draw, fill=black,inner sep=1pt] (-1_-1point5) at (-1,-1.5) {};
     \draw[thick,red] (-1point5_-2) --  (-1_-2);
\draw[thick,blue] (-1_-2point5) --  (-1_-2);
\draw[thick,blue] (-1_-2) -- (-1_-1point5);

\node[circle, draw, fill=black,inner sep=1pt] (-1point5_2) at (-1.5,2) {};
\node[circle, draw, fill=black,inner sep=1pt] (-1_2point5) at (-1,2.5) {};
\node[circle, draw, fill=black,inner sep=1pt] (-1_1point5) at (-1,1.5) {};
 \draw[thick,red] (-1point5_2) --  (-1_2);
\draw[thick,blue] (-1_2point5) --  (-1_2);
\draw[thick,blue] (-1_2) -- (-1_1point5);

\node[circle, draw, fill=black,inner sep=1pt]	 (0_-3point5) at (0,-3.5) {};
\node[circle, draw, fill=black,inner sep=1pt] (0point5_-3) at (0.5,-3) {};
\node[circle, draw, fill=black,inner sep=1pt] (-0point5_-3) at (-0.5,-3) {};
\draw[thick,blue] (0_-3point5) --  (0_-3);
\draw[thick,red] (0_-3) --  (0point5_-3);
\draw[thick,red] (-0point5_-3) --  (0_-3);

\node[circle, draw, fill=black,inner sep=1pt] (0_3point5) at (0,3.5) {};
\node[circle, draw, fill=black,inner sep=1pt] (0point5_3) at (0.5,3) {};
\node[circle, draw, fill=black,inner sep=1pt] (-0point5_3) at (-0.5,3) {};
          \draw[thick,blue] (0_3point5) --  (0_3);
\draw[thick,red] (0_3) --  (0point5_3);
\draw[thick,red] (-0point5_3) --  (0_3);

\node[circle, draw, fill=black,inner sep=1pt]	 (1point5_1) at (1.5,1) {};
\node[circle, draw, fill=black,inner sep=1pt]	 (2point5_1) at (2.5,1) {};
\node[circle, draw, fill=black,inner sep=1pt] (2_1point5) at (2,1.5) {};
      \draw[thick,blue] (2_1) --  (2_1point5);
\draw[thick,red] (2_1) --  (2point5_1);
\draw[thick,red] (1point5_1) --  (2_1);

\node[circle, draw, fill=black,inner sep=1pt] (-1point5_1) at (-1.5,1) {};
\node[circle, draw, fill=black,inner sep=1pt] (-2point5_1) at (-2.5,1) {};
\node[circle, draw, fill=black,inner sep=1pt] (-2_1point5) at (-2,1.5) {};
 \draw[thick,blue] (-2_1) --  (-2_1point5);
\draw[thick,red] (-2point5_1) --  (-2_1);
\draw[thick,red] (-2_1) --  (-1point5_1);

\node[circle, draw, fill=black,inner sep=1pt]  (2_-1point5) at (2,-1.5) {};
\node[circle, draw, fill=black,inner sep=1pt] (2point5_-1) at (2.5,-1) {};
\node[circle, draw, fill=black,inner sep=1pt] (1point5_-1) at (1.5,-1) {};
          \draw[thick,blue] (2_-1) --  (2_-1point5);
\draw[thick,red] (2_-1) --  (2point5_-1);
\draw[thick,red] (1point5_-1) --  (2_-1);

\node[circle, draw, fill=black,inner sep=1pt] (-2_-1point5) at (-2,-1.5) {};
\node[circle, draw, fill=black,inner sep=1pt] (-2point5_-1) at (-2.5,-1) {};
\node[circle, draw, fill=black,inner sep=1pt] (-1point5_-1) at (-1.5,-1) {};
          \draw[thick,blue] (-2_-1) --  (-2_-1point5);
\draw[thick,red] (-2_-1) --  (-2point5_-1);
\draw[thick,red] (-1point5_-1) --  (-2_-1);

\draw[thick,ForestGreen] (0_0) --  (2_1);
\draw[thick,ForestGreen] (1point5_1) -- (2_1point5);
\draw[thick,ForestGreen] (0_0) --  (-1_-2);
\draw[thick,ForestGreen] (-1point5_-2) --  (-1_-1point5);
\draw[thick,ForestGreen] (-0point5_-3) --  (0_-2);
\draw[thick,ForestGreen] (-2_0) --  (0_2);
\draw[thick,ForestGreen] (0_-2) --  (1_-1point5);
\draw[thick,ForestGreen] (-2point5_-1) -- (-2_0);
\draw[thick,ForestGreen] (0_2) -- (1_2point5);
\draw[thick,ForestGreen] (2_0) -- (3_0point5);
\draw[thick,ForestGreen]  (1point5_-1) -- (2_0);
\draw[thick,ForestGreen]  (-3point5_0) -- (-3_0point5);
\draw[thick,ForestGreen]  (-2point5_1) -- (-2_1point5);
\draw[thick,ForestGreen]  (-1point5_2) -- (-1_2point5);
\draw[thick,ForestGreen]  (-0point5_3) -- (0_3point5);
\draw[thick,ForestGreen]  (-3_0) -- (-2_1);
\draw[thick,ForestGreen]  (-1_2) -- (0_3);
\end{tikzpicture}}
\caption{$\text{Cay}(F(X),\Omega)$
\label{fig:f2shortcut}}

\end{center}
\end{subfigure}
\end{center}
      \caption{Geodetic Cayley graphs for $F(X)$ discussed in \cref{eg:shortcutinfreegroups}.
      \label{fig:f2}}
\end{figure}
\end{example}

\begin{example}\label{eg:cyclicorcompletegeodeticfinitegroups}
If $G$ is a finite group, we know of only two types of geodetic generating sets:  $G\setminus\{1_G\}$ which gives a complete graph as the Cayley graph; and for cyclic groups of odd order,  a single cyclic generator and its inverse gives an odd cycle as the Cayley graph. For example $\mathcal{C}_5=\langle a \mid a^5=1\rangle$ has geodetic generating sets $\{a^{\pm1}\}$ and $\{a,a^2,a^3,a^4\}$.
\end{example}

\begin{example}\label{eg:standardgeodeticgensetforplaingroup}
A \emph{plain group} is a free product of finitely many finite groups and a free group of finite rank. Let $G=G_1\ast\cdots\ast G_m \ast F(X)$, where the $G_i$ are finite groups and $X$ is a finite set. A 
geodetic generating set for $G$ is given by a disjoint union $\Sigma_1\sqcup\cdots\sqcup \Sigma_m\sqcup\Sigma_{F(X)}$, where $\Sigma_i$ are geodetic generating sets for the finite groups $G_i$ and $\Sigma_{F(X)}$ is a geodetic generating set for the free group $F(X)$. 
We will refer to a geodetic generating set for a free product where each factor has a disjoint geodetic generating set as \emph{standard}.
\end{example}

A natural question arises: are these the only possible geodetic Cayley graphs? Aside from the choice of $\Sigma_{F(X)}$ in the free factor, and choosing either a cycle or complete graph for the finite factors, before now, to the authors' knowledge,
no other way to construct geodetic Cayley graphs was known.

\section{Preliminaries}

In this section we list notation and basic results that will be used in this paper.

\subsection{Words and factors}

An \emph{alphabet} is a finite set.
If $\Sigma$ is an alphabet, $\Sigma^\ast$ denotes the set of all sequences $(x_1,\dots, x_n)$ of length $n$ with $x_i\in \Sigma$, which we write as words  $x_1\cdots x_n$.  
If $w\in \Sigma^\ast$ we write $|w|$ for the length of $w$. The word of length $0$ is denoted $\lambda$.  If $u,v\in \Sigma^\ast$ we say $v$ is a \emph{factor} of $u$ if there exist $\alpha,\beta\in\Sigma^\ast$ so that $u=\alpha v\beta$, and a \emph{proper factor} if $u\neq v$.

\subsection{Graphs and labelled graphs}
A \emph{(simple undirected) graph} $\Gamma$ comprises a set $V(\Gamma)$ of \emph{vertices} and a set $E(\Gamma)\subseteq \{Y \subseteq V(\Gamma) \mid |Y| = 2\}$ of \emph{(undirected) edges}.
Distinct vertices $u,v\in V(\Gamma)$ are \emph{adjacent} if $\{u,v\}\in E(\Gamma)$. 
A graph is \emph{locally-finite} if no vertex is adjacent to infinitely many other vertices.

\begin{definition}[Graph isomorphism]
    Let $\Gamma_1$ and $\Gamma_2$ be graphs. We say $\Gamma_1$ is isomorphic to $\Gamma_2$, denoted by $\Gamma_1\cong\Gamma_2$, if there exists a bijection $\pi: V(\Gamma_1)\to V(\Gamma_2)$ such that $\{v_1,v_2\}\in E(V_1)$ if and only if $\{\pi(v_1),\pi(v_2)\}\in E(\Gamma_2)$.
\end{definition}

For $a,b\in\mathbb Z$ with $a\leq b$, let $[a,b]$ denote the set $\{k\in\mathbb Z\mid a\leq k\leq b\}$.
A \emph{path} $p$ of \emph{length} $n\in \mathbb{N}$ in $\Gamma$ is a sequence of vertices $p=(v_0,v_1,\dots,v_n)$ such that for all 
$i \in [0,n-1]$ 
we have that $v_i$ is adjacent to $v_{i+1}$; we write $|p| = n$ and we say that $v_0$ is the \emph{initial vertex} and $v_n$ is the \emph{terminal vertex}. The sequence $p^{-1}=(v_n,v_{n-1},\dots,v_0)$ is called  
 the \emph{reverse path of $p$}.
A path with equal initial and terminal vertex is called a \emph{circuit}. A path that is not a circuit is \emph{embedded} if each vertex in the sequence of vertices is distinct. A circuit is \emph{embedded} if it has length at least three and the only non-distinct vertices are the initial and terminal vertices. In particular, if $u$ and $v$ are adjacent vertices in $\Gamma$, then the path $(u, v, u)$ is a circuit, but not an embedded circuit. A path is a \emph{geodesic} if its length is minimal among all paths with the same initial and terminal vertices. A graph $\Gamma$ is called \emph{geodetic} if, for each pair of vertices $u, v$, there exists a unique geodesic with initial vertex $u$ and terminal vertex $v$.

Let $\Gamma$ be a graph and $X$ a set.
A function $L\colon \{(u, v) \in V(\Gamma) \times V(\Gamma) \mid \{u, v\} \in E(\Gamma)\} \to X$ is called an \emph{$X$-labelling function} on $\Gamma$. 
An \emph{$X$-labelled graph} 
is a graph $\Gamma$ together with an $X$-labelling function $L$ on $\Gamma$. When the   $X$-labelling  function is understood, we refer to the $X$-labelled graph just as $\Gamma$.
For any path $p = (v_0, v_1, \dots, v_n)$ in an $X$-labelled graph $\Gamma$, the word $L(v_0, v_1) L(v_1, v_2) \dots L(v_{n-1}, v_n)$ is called the the \emph{label} on $p$; any path of length 0 is labelled by the empty word.

\begin{definition}[Labelled graph isomorphism]
Let $(\Gamma_i,L_i)$ be $X$-labelled graphs for $i\in\{1,2\}$. We say $(\Gamma_1,L_1)$ is isomorphic to $(\Gamma_2,L_2)$, denoted by $(\Gamma_1,L_1)\cong (\Gamma_2,L_2)$, if there is a graph isomorphism $\pi:V(\Gamma_1)\to V(\Gamma_2)$ that respects the labellings of the graph, that is,  $L_1(u,v))=L_2(\pi(u),\pi(v))$ for all $\{u,v\}\in E(\Gamma_1)$. 
\end{definition}

 \begin{definition}[\GamSlex{$\Gamma$}]\label{def:GammaSlex}
   Let $\Gamma$ be an $X$-labelled graph, and let 
 $\leq$ be a total order on 
 $X^*$ (typically, one would define a total order on $X$ which extends to a total order on $X^*$ in the obvious way).
 For $u,v\in X^\ast$ define $u\leq_{\text{slex}}v$ if $|u|\leq |v|$ and $u\leq v$.
 We say that a 
 path $\gamma$ in $\Gamma$ labelled by $u \in X^\ast$ 
 is 
 \emph{\GamSlex{$(\Gamma,\leq)$}}
 if $u\leq_{\text{slex}}v$ for all  $v \in X^\ast$ that label a path in $\Gamma$ with the same initial and terminal vertices as $u$. 
Since the choice of total order on $X$ is arbitrary, we will omit $\leq$ in the notation and simply say that  $\gamma$ is 
 \GamSlex{$\Gamma$}.
 \end{definition}

\subsection{Subdivisions}\label{subsec:subdiv} 

 Let $\Gamma_n$ be the graph obtained from $\Gamma$ by subdividing each edge into $2n+1$ sub-edges (so $\Gamma$ is a minor of $\Gamma_n$ obtained by contracting edges).  See for example \cref{fig:subdivisionofgraph}.
 Let 
 $f\colon \Gamma_n\to \Gamma$ be the homeomorphism corresponding to obtaining the minor,
 $\oldV=f^{-1}(V(\Gamma))$, and $\newV=V(\Gamma_n)\setminus \oldV$. Note that, by construction, embedded paths between vertices in $\oldV$ have length which is a multiple of $2n+1$, and therefore so do embedded circuits.

\begin{figure}[ht]
\begin{center}

 \begin{subfigure}{0.45\textwidth}
 \begin{center}
      \begin{tikzpicture}[rotate=18]
\def\r{1.7} 
\foreach \x in {-2,...,2}
\node[circle, draw, fill=black,inner sep=1.3pt] (\x) at (72*\x:\r) {};
\draw[thick] (-2) -- (-1);
\draw[thick] (-2) -- (2);
\draw[thick] (0) -- (-1);
\draw[thick] (0) -- (1);
\draw[thick] (1) -- (2);
\end{tikzpicture}
\caption{The $5$-cycle graph $\Gamma$.
\label{fig:Subdiv5-left}}
\end{center}
\end{subfigure}
      \begin{subfigure}{0.45\textwidth}
      \begin{center}
      \begin{tikzpicture}[rotate=18]
\def\r{1.7} 
\foreach \x in {-2,...,2}
\node[circle, draw, fill=black,inner sep=1.3pt] (\x) at (72*\x:\r) {};

\draw[thick] (-2) -- (-1)  node[pos=1/3,circle,color=gray,fill=gray,inner sep=1.3pt]{} node[pos=2/3,circle,color=gray,fill=gray,inner sep=1.3pt]{};
\draw[thick] (-2) -- (2) node[pos=1/3,circle,color=gray,fill=gray,inner sep=1.3pt]{} node[pos=2/3,circle,fill=gray,inner sep=1.3pt]{};;
\draw[thick] (0) -- (-1) node[pos=1/3,circle,fill=gray,color=gray,inner sep=1.3pt]{} node[pos=2/3,circle,color=gray,fill=gray,inner sep=1.3pt]{};;
\draw[thick] (0) -- (1) node[pos=1/3,circle,color=gray,fill=gray,inner sep=1.3pt]{} node[pos=2/3,circle,color=gray,fill=gray,inner sep=1.3pt]{};;
\draw[thick] (1) -- (2) node[pos=1/3,circle,color=gray,fill=gray,inner sep=1.3pt]{} node[pos=2/3,circle,color=gray,fill=gray,inner sep=1.3pt]{};

\end{tikzpicture}

\caption{The graph $\Gamma_1$ obtained by replacing each edge of $\Gamma$  by a path of length $2(1)+1=3$.
\label{fig:Subdiv5-right}}
\end{center}
\end{subfigure}
\end{center}
      \caption{The subdivision graph of a $5$-cycle graph with $n=1$.
      \label{fig:subdivisionofgraph}}
\end{figure}
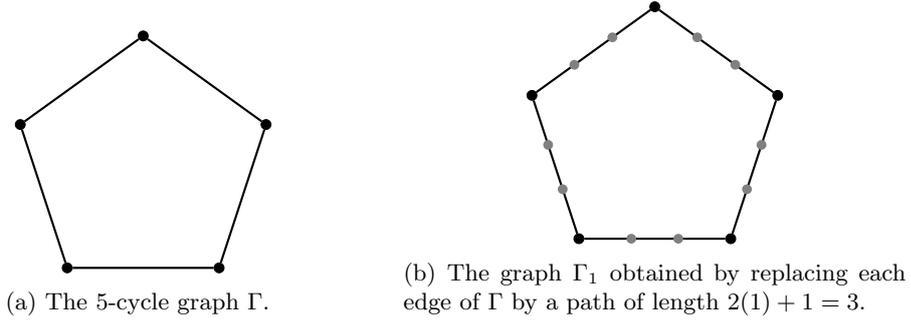

The next lemma  states
 that $\Gamma_n$ is geodetic if $\Gamma$ is geodetic. This can be extracted as a special case from the proof of \cite[Theorem 3.1]{parthasarathy1982some}, but we include a proof for completeness.

\begin{lemma}\label{lem:uniformsubdivision}
For each $n\in \mathbb{N}$, $\Gamma$ is geodetic if and only if $\Gamma_n$ is geodetic. 
\end{lemma}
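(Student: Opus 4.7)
The key structural observation is that each vertex of $\newV$ has exactly two neighbors, both lying on the same subdivided edge. Consequently, an embedded path in $\Gamma_n$ that enters the subdivision of an edge $e \in E(\Gamma)$ at a vertex of $\newV$ must exit at the other $\oldV$-endpoint of $e$. Hence every embedded path in $\Gamma_n$ with both endpoints in $\oldV$ is the lift (via $f^{-1}$) of a unique embedded path in $\Gamma$, with length scaled by $2n+1$; this correspondence settles the lemma for pairs of vertices in $\oldV$ in both directions.

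For the forward direction with at least one endpoint in $\newV$, suppose $u \in \newV$ lies on the subdivision of $e_1 = \{a_1, b_1\}$ at position $j_1 \in [1, 2n]$ from $a_1$, and $v \in \newV$ lies on the subdivision of $e_2 = \{a_2, b_2\}$ at position $j_2$ from $a_2$ (the mixed case with $v \in \oldV$ is analogous and simpler). If $e_1 = e_2$ then the sub-path of the subdivision is the unique geodesic. Otherwise, any embedded $u$-to-$v$ path in $\Gamma_n$ first exits the subdivision of $e_1$ at some $\alpha \in \{a_1, b_1\}$, traverses $\oldV$ via fully subdivided edges (lifting an embedded $\Gamma$-path), and enters the subdivision of $e_2$ at some $\beta \in \{a_2, b_2\}$, with total length $d_{\mathrm{sub}}(u, \alpha) + (2n+1)\,\ell_\Gamma(\alpha, \beta) + d_{\mathrm{sub}}(\beta, v)$. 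Geodeticity of $\Gamma$ lets me replace $\ell_\Gamma(\alpha, \beta)$ by $d_\Gamma(\alpha, \beta)$, and a short calculation shows that the minimizing pair has its $\Gamma$-geodesic avoiding $e_1$ and $e_2$ (otherwise swapping an endpoint strictly shortens the length). Uniqueness of the minimizing pair $(\alpha, \beta)$ is the delicate step: differences of the four candidate lengths reduce to expressions of the form $2j_i + (2n+1)k$ or $2(j_1 \pm j_2) + (2n+1)k$ with small integer $k$, and the oddness of $2n+1$ together with $j_i \in [1, 2n]$ rules out most coincidences. Any remaining ``diagonal'' tie (between pairs $(a_1, a_2)$ and $(b_1, b_2)$) is dispatched by a short cycle-in-$\Gamma$ argument: the two competing geodesics would produce a closed walk in $\Gamma$ of even length $2(d+1)$ whose antipodal vertices admit two geodesics of equal length, contradicting geodeticity of $\Gamma$.

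The reverse direction is immediate: if $\Gamma$ admitted two distinct geodesics between vertices $a, b \in V(\Gamma)$, their lifts would produce two distinct embedded paths in $\Gamma_n$ of length $(2n+1)\,d_\Gamma(a, b) = d_{\Gamma_n}(a, b)$ (by the bijection above), contradicting geodeticity of $\Gamma_n$. The main obstacle throughout is the forward $\newV$-to-$\newV$ case, and specifically verifying that the four candidate endpoint pairs yield a uniquely minimizing length; this requires the oddness of $2n+1$ as the essential arithmetic tool, augmented by a cycle argument in $\Gamma$ to dispatch the borderline symmetric configuration.
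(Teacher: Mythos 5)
Your strategy is genuinely different from the paper's: the paper argues by contradiction, taking two distinct geodesics whose concatenation is an embedded circuit, using the fact that embedded circuits in $\Gamma_n$ have length a multiple of $2n+1$, and then exploiting the oddness of $2n+1$ to locate a pair of original vertices joined by two equal-length paths in $\Gamma$. You instead compute the distance directly by minimising over the four exit/entry pairs $(\alpha,\beta)\in\{a_1,b_1\}\times\{a_2,b_2\}$ and argue the minimiser is unique. The degree-two observation, the correspondence for endpoints in $\oldV$, the reverse direction, the reduction to geodesics avoiding $e_1,e_2$, and the parity analysis ruling out ``adjacent'' ties (e.g.\ between the $(a_1,a_2)$ and $(a_1,b_2)$ candidates) are all correct.

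The gap is in the step you yourself flag as the delicate one: dispatching the diagonal tie. The parity computation correctly forces $d_\Gamma(a_1,a_2)=d_\Gamma(b_1,b_2)=d$ and $j_1+j_2=2n+1$, but the assertion that the resulting closed walk of length $2(d+1)$ has ``antipodal vertices admitting two geodesics of equal length'' is not automatic. The two halves of that walk joining $a_1$ to $b_2$ (namely $\gamma_1\cdot e_2$ and $e_1\cdot \gamma_2$) are geodesics only if $d_\Gamma(a_1,b_2)=d+1$; an even closed walk in a geodetic graph need not yield equal geodesics between antipodal vertices. Indeed the configuration with $d_\Gamma(a_1,b_2)\le d$ and $d_\Gamma(b_1,a_2)\le d$ really occurs in geodetic graphs --- take $\Gamma=K_4$ with $e_1,e_2$ a perfect matching, so all four distances equal $1$ --- and there your cycle argument would ``derive'' a contradiction with the geodeticity of $K_4$, which is false. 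The correct resolution in that sub-case is that the diagonal pair was never the minimiser: writing $2n+1-j_2=j_1$, one gets $L_{a_1b_2}\le 2j_1+(2n+1)d$ and $L_{b_1a_2}\le 2j_2+(2n+1)d$, and since $\min(j_1,j_2)\le n$ one of these is strictly less than $L_{a_1a_2}=(2n+1)(d+1)$ (one also checks the relevant $\Gamma$-geodesic avoids $e_1$ and $e_2$, since otherwise $d_\Gamma(a_1,a_2)$ or $d_\Gamma(b_1,b_2)$ would drop below $d$). With this case added your argument closes, but as written the crucial step does not go through.
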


\begin{proof}
Suppose $\Gamma_n$ is geodetic.  Let $p, q \in V(\Gamma)$.  There is a one-to-one  correspondence between embedded paths in $\Gamma$ with initial vertex $p$ and terminal vertex $q$, and embedded paths in $\Gamma_n$ with the corresponding initial and terminal vertices.  Under the correspondence, a path of length $k$ in $\Gamma$ corresponds to a path of length $k(2n+1)$ in $\Gamma_n$.  Since $\Gamma_n$ is geodetic, there is a unique shortest embedded path in $\Gamma_n$, and hence a unique shortest embedded path in $\Gamma$ with initial vertex $p$ and terminal vertex $q$.  Hence $\Gamma$ is geodetic.

Conversely, suppose $\Gamma$ is geodetic and for contradiction that $\Gamma_n$ is not geodetic. Then there exists distinct vertices $p,q\in V(\Gamma_n)$ and two distinct geodesics $\alpha,\beta$ of the same length with initial vertex $p$ and terminal vertex $q$. 
Recall that $\beta^{-1}$ denotes the reverse path of $\beta$. 
Without loss of generality we may assume that path $\alpha\beta^{-1}$ is an embedded circuit. 
 Hence,  $\alpha\beta^{-1}$ has even length in $\Gamma_n$ being a multiple of $2(2n+1)$. Thus, there exists a $k\in \mathbb{N}_+$ such that $d_{\Gamma_n}(p,q)=|\alpha|=|\beta|=k(2n+1)$.

If both $p,q\in \oldV$ then the length of $f(\alpha)$ and $f(\beta)$ in $\Gamma$ is $\frac{d_{\Gamma_n}(p,q)}{2n+1}=k$, and since $\Gamma$ is geodetic there exists a path $\gamma$ from $f(p)$ to $f(q)$ in $\Gamma$ with $|\gamma|<k$. But $f^{-1}(\gamma)$ has length $|\gamma|(2n+1)$ which is shorter than $k(2n+1)$, a contradiction.

Now without loss of generality let $p\in \newV$.
    Consider the sequence of vertices in $\oldV$ visited by $\alpha, \beta$ respectively. 
    Let $x_1,x_2$ (resp. $y_1,y_2$) be the first and last vertices in this sequence for $\alpha$ (resp. $\beta$). Note that $x_1\neq y_1$ since the first and last edges of $\alpha$ and $\beta$ are distinct and $p\in \newV$. 
    
    Since $d_{\Gamma_n}(x_1,y_1)=2n+1$ is odd, $p$ is closer to one of $x_1,y_1$  than to the other. Without loss of generality assume $d=d_{\Gamma_n}(x_1,p)<d_{\Gamma_n}(y_1,p)$, so $0<d\leq n$. Then \begin{align*}
        k(2n+1)&= |\alpha|\\
        &=d_{\Gamma_n}(p,x_1)+d_{\Gamma_n}(x_1,x_2)+d_{\Gamma_n}(x_2,q)  \\
        &=d+(2n+1)d_{\Gamma_n}(f(x_1),f(x_2))+d_{\Gamma_n}(x_2,q)
    \end{align*} so $d+d_{\Gamma_n}(x_2,q)$ is a multiple of $2n+1$. Since
    $0< d\leq  n$ and 
    $0< d, d_{\Gamma_n}(x_2,q)< 2n+1$, then $d+d_{\Gamma_n}=2n+1$, so 
$d_{\Gamma_n}(x_2,q)=2n+1-d$, and it follows that $d_{\Gamma_n}(y_2,q)=d_{\Gamma_n}(x_1,p)$.

Since we have two paths from $x_1$ to $y_2$ in $\Gamma_n$ each of length $k(2n+1)$, in $\Gamma$ we have two paths of length $k$ from $f(x_1)$ to $f(y_2)$, and since $\Gamma$ is geodetic there exists a path $\gamma$ from $f(x_1)$ to $f(y_2)$ that is shorter than $k$ (see \cref{fig:lemma-unif-subdiv}). This gives a path in $\Gamma_n$  (via $f^{-1}(\gamma)$) from $x_1$ to $y_2$ of length at most $(k-1)(2n+1)$, which gives a path from $p$ to $x_1$, to $y_2$ along $f^{-1}(\gamma)$, to $q$, which has 
length at most $2d+(k-1)(2n+1)$, which is less than $k(2n+1)$. This contradicts $\alpha,\beta$ being distinct geodesics in $\Gamma_n$.\end{proof}

\begin{figure}[ht]

      \begin{center}
      \begin{tikzpicture}[rotate=18]
\def\r{2.7} 
\foreach \x in {-3,...,2}
\node[circle, draw, fill=black,inner sep=1.3pt] (\x) at (60*\x:\r) {};

\draw[thick] (-3) -- (-2)  
node[pos=1/5,circle,color=gray,fill=gray,inner sep=1.3pt]{} 
node[pos=2/5,circle,color=gray,fill=gray,inner sep=1.3pt]{}
node[pos=3/5,circle,color=gray,fill=gray,inner sep=1.3pt]{} node[pos=4/5,circle,color=gray,fill=gray,inner sep=1.3pt]{};
\draw[thick] (-2) -- (-1)  
node[pos=3/5,above left]{\color{blue}$q$} node[pos=1/5,circle,color=gray,fill=gray,inner sep=1.3pt]{} node[pos=2/5,circle,color=gray,fill=gray,inner sep=1.3pt]{}
node[pos=3/5,circle,color=blue,fill=blue,inner sep=1.3pt]{} node[pos=4/5,circle,color=gray,fill=gray,inner sep=1.3pt]{};
\draw[thick] (-3) -- (2) 
node[pos=1/2,left]{$\alpha$} node[pos=1/5,circle,color=gray,fill=gray,inner sep=1.3pt]{} node[pos=2/5,circle,color=gray,fill=gray,inner sep=1.3pt]{}
node[pos=3/5,circle,color=gray,fill=gray,inner sep=1.3pt]{} node[pos=4/5,circle,color=gray,fill=gray,inner sep=1.3pt]{};
\draw[thick] (0) -- (-1) 
node[pos=1/5,circle,color=gray,fill=gray,inner sep=1.3pt]{} node[pos=2/5,circle,color=gray,fill=gray,inner sep=1.3pt]{}
node[pos=3/5,circle,color=gray,fill=gray,inner sep=1.3pt]{} node[pos=4/5,circle,color=gray,fill=gray,inner sep=1.3pt]{};
\draw[thick] (0) -- (1) 
node[pos=1/2, above right ]{$\beta$} node[pos=1/5,circle,color=gray,fill=gray,inner sep=1.3pt]{} node[pos=2/5,circle,color=gray,fill=gray,inner sep=1.3pt]{}
node[pos=3/5,circle,color=gray,fill=gray,inner sep=1.3pt]{} node[pos=4/5,circle,color=gray,fill=gray,inner sep=1.3pt]{};
\draw[thick] (1) -- (2) 
node[pos=3/5,above left]{\color{blue}$p$} node[pos=1/5,circle,color=gray,fill=gray,inner sep=1.3pt]{} node[pos=2/5,circle,color=gray,fill=gray,inner sep=1.3pt]{}
node[pos=3/5,circle,color=blue,fill=blue,inner sep=1.3pt]{} node[pos=4/5,circle,color=gray,fill=gray,inner sep=1.3pt]{};

 \draw[decorate, <-, color=red,
 decoration={snake, segment length=35mm, amplitude=2mm}
 ]           (-1)  -- (2)
 node[pos=1/2,above right  ]{$\gamma$} ;

\end{tikzpicture}

\end{center}
      \caption{Paths $\alpha,\beta,\gamma$ in the proof of \cref{lem:uniformsubdivision}.
      \label{fig:lemma-unif-subdiv}}
\end{figure}
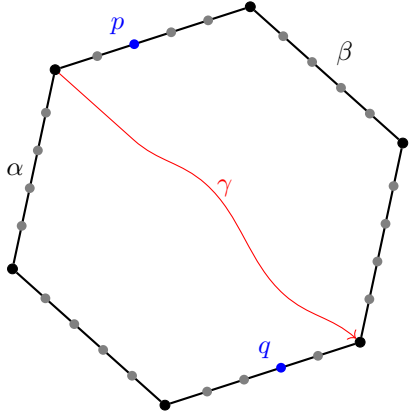

\subsection{Cayley graphs and geodetic groups}

Let $G$ be a group.  We write $1_G$ for the identity element in $G$, and we adopt the convention that the empty product is equal to $1_G$.  A subset $\Sigma \subset G$ is a \emph{finite inverse-closed generating set for $G$} if $\Sigma$ is finite, $\Sigma$ does not contain the identity element $1_G$, for every $g \in \Sigma$ we have that $g^{-1} \in \Sigma$,  and every element of $G$ is a product of elements in $\Sigma$.   Given such a set $\Sigma$, the \emph{Cayley graph of $G$ with respect to $\Sigma$} is the $\Sigma$-labelled graph $\Gamma = \text{Cay}(G, \Sigma)$ with $V(\Gamma) = G$, $E(\Gamma) = \{\{g, h\} \subset G \mid g^{-1} h \in \Sigma\}$, together with $\Sigma$-labelling function $L$ 
defined by $L((g,h))=g^{-1} h\in\Sigma$ for each $(g,h)\in V(\Gamma)\times V(\Gamma)$ with $\{g,h\}\in E(\Gamma)$.
It follows immediately that for any pair of adjacent vertices $g$ and $h$, $L(g, h)$ and $L(h, g)$ are inverses in $G$, and $L(h,g) = L(g,h) = x$ if and only if $x$ has order two.

As noted in the introduction, we call a group $G$ \emph{geodetic} if it admits a finite inverse-closed generating set $\Sigma$ such that 
$\text{Cay}(G, \Sigma)$ is geodetic; in which case we say that $\Sigma$ is a \emph{geodetic generating set for $G$}.

\subsection{Rewriting systems}

Following Otto and Book \cite{OttoBook}, 
a \emph{(string) rewriting system} is a pair $(\Sigma,T)$ where $\Sigma$ is an alphabet, and $T$ is a subset of $\Sigma^\ast\times\Sigma^\ast$ called the set of \emph{rewriting rules}.
The set $T$ determines a relation $\rightarrow$ on $\Sigma^\ast$ given by $u\ell v\rightarrow urv$ if $(\ell,r)\in T$ 
and  $u,v\in \Sigma^\ast$.
We write $\xrightarrow{\ast}$ for the reflexive and transitive closure of $\rightarrow$, and  $\overset{\ast}{\leftrightarrow}$ for the symmetric closure of $\xrightarrow{\ast}$. A word $u\in \Sigma^\ast$ is \emph{irreducible} if no factor is the left-hand side of any rewriting rule.

A rewriting system $(\Sigma,T)$ is called:
\bi
\item[(i)] \emph{finite} if $T$ is a finite set,

\item[(ii)]  \emph{length-reducing} if for each $(\ell, r)\in T$ we have $|\ell|>|r|$,

\item[(iii)] \emph{confluent} if for all $u,v,w\in \Sigma^{\ast}$,  $u\overset{\ast}{\leftarrow} w\overset{\ast}{\rightarrow} v$ implies that there exists $t\in \Sigma^{\ast}$ such that $u\overset{\ast}{\rightarrow}t\overset{\ast}{\leftarrow}v$,
\item[(iv)] \emph{Church-Rosser} if for all $u,v\in\Sigma^\ast$, $u\overset{\ast}{\leftrightarrow}v$ implies there exists $w\in \Sigma^\ast$ such that $u\overset{\ast}{\rightarrow}w\overset{\ast}{\leftarrow} v$.

\ei

The operation of concatenation is well defined on $\overset{\ast}{\leftrightarrow}$-classes, and so $\Sigma^\ast$ with concatenation forms a monoid. We denote this monoid by $M(\Sigma,T)$ and call it the monoid presented by $(\Sigma,T)$. 
If for every $x\in \Sigma^\ast$ there exists $y\in \Sigma^\ast$ such that $xy\overset{\ast}{\rightarrow}\lambda$ and $yx\overset{\ast}{\rightarrow}\lambda$, then we call $y$ the \emph{inverse} of $x$ in $M(\Sigma,T)$. We say $(\Sigma,T)$ is \emph{inverse-closed} if there is no $x\in \Sigma$ with $x\overset{\ast}{\leftrightarrow}\lambda$ and
for all $x\in \Sigma$ there exists an inverse $y\in \Sigma$.
Note that if $(\Sigma,T)$ is inverse-closed, then $M(\Sigma,T)$ is a group with inverse-closed generating set $\Sigma$.
Given a word $w=x_1x_2\cdots x_n\in \Sigma^\ast$ in an inverse-closed rewriting system $(\Sigma,T)$, let $w^{-1}=x_n^{-1}\cdots x_2^{-1}x_1^{-1}$ be the \emph{inverse word}.

We have the following connection between inverse-closed confluent length-reducing rewriting systems and geodetic groups.
\begin{lemma}\label{lem:icfclrrpresentsgroupwithgeodeticgeneratingset}
Let $(\Sigma, T)$ be an inverse-closed length-reducing rewriting system. Then $(\Sigma,T)$ is confluent if and only if $M(\Sigma,T)$ is a group with geodetic generating set $\Sigma$.
\end{lemma}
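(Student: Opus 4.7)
My plan is to prove each direction of the biconditional using standard rewriting-theoretic machinery. Since $(\Sigma,T)$ is length-reducing, $\overset{\ast}{\rightarrow}$ terminates, so confluence is equivalent (via Newman's lemma and the Church--Rosser property) to the statement that every $\overset{\ast}{\leftrightarrow}$-class contains a unique irreducible representative.

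For the forward direction, assume $(\Sigma,T)$ is confluent. Inverse-closedness supplies, for each $x \in \Sigma$, some $y \in \Sigma$ with $xy \overset{\ast}{\rightarrow} \lambda$ and $yx \overset{\ast}{\rightarrow} \lambda$, making $[y]$ a two-sided inverse of $[x]$ in $M(\Sigma,T)$. The hypothesis that no $x \in \Sigma$ has $x \overset{\ast}{\leftrightarrow} \lambda$ prevents any $[x]$ from collapsing to $1_{M(\Sigma,T)}$, so $M(\Sigma,T)$ is a group and $\Sigma$ descends to a finite inverse-closed generating set avoiding the identity. To see $\Sigma$ is geodetic, fix $g \in M(\Sigma,T)$ and let $u_0$ be the unique irreducible with $[u_0]=g$. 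Any other word $w$ representing $g$ satisfies $w \overset{\ast}{\rightarrow} u_0$ by Church--Rosser, and length-reducing forces $|w| \ge |u_0|$ with equality only when $w = u_0$. Hence $u_0$ is the unique shortest word representing $g$, and left-translation invariance of $\text{Cay}(M(\Sigma,T),\Sigma)$ upgrades this to uniqueness of geodesics between arbitrary vertex pairs.

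For the reverse direction, assume $M(\Sigma,T)$ is a group with geodetic generating set $\Sigma$. Termination reduces confluence to showing that any two irreducibles $u,v$ with $[u]=[v]$ in $M(\Sigma,T)$ coincide. The strategy rests on a sub-claim that every irreducible word is itself a shortest (geodesic) word representing its group element; granting this, geodeticity of $\text{Cay}(M(\Sigma,T),\Sigma)$ forces both $u$ and $v$ to equal the unique shortest word for $[u]$, so $u = v$.

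The main obstacle is establishing this sub-claim, since length-reducing by itself only guarantees that applying a rule strictly decreases length, not that every non-geodesic word admits an applicable rule. My proposed argument: suppose $u$ is irreducible but $|u|$ strictly exceeds the word-length $\ell_\Sigma([u])$ of $[u]$; let $w$ be the unique shortest representative of $[u]$, and consider a chain $u = x_0 \leftrightarrow x_1 \leftrightarrow \cdots \leftrightarrow x_k = w$ of minimal total rewriting steps realizing $u \overset{\ast}{\leftrightarrow} w$. Since $u$ is irreducible, the first step must be $u \leftarrow x_1$, so $|x_1| > |u|$. A careful analysis of the peaks in this chain, combining uniqueness of geodesics in $\text{Cay}(M(\Sigma,T),\Sigma)$ with the inverse-closed structure (which supplies for each word an inverse word whose concatenation rewrites to $\lambda$), should locate an applicable reduction inside $u$ itself, contradicting irreducibility. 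Handling this peak analysis cleanly is the central technical point.
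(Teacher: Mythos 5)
Your forward direction is correct and is essentially the paper's own argument: confluence of a terminating (length-reducing) system gives a unique irreducible normal form in each $\overset{\ast}{\leftrightarrow}$-class, length-reduction makes that normal form the unique shortest representative, and homogeneity of the Cayley graph transports uniqueness of geodesics at the identity to all pairs of vertices. No complaints there.

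The reverse direction, however, is not a proof: you reduce everything to the sub-claim that every irreducible word is a geodesic word, and then only sketch a ``peak analysis'' that ``should'' locate a redex inside an irreducible non-geodesic word. This gap cannot be closed as stated, because the sub-claim is false for general inverse-closed length-reducing systems. Take $\Sigma=\{a,A\}$ and $T=\{(aA,\lambda),\,(Aa,\lambda),\,(aaa,\lambda),\,(AAA,\lambda)\}$. This system is finite, length-reducing and inverse-closed, and $M(\Sigma,T)$ is the cyclic group of order three with $\Sigma$ mapping onto the two non-identity elements, so $\text{Cay}(M(\Sigma,T),\Sigma)$ is the complete graph $K_3$ and is geodetic. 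Yet $aa$ and $A$ are both irreducible and $aa\overset{\ast}{\leftrightarrow}A$ (via the peak $aaaA$), so the system is not confluent: $aa$ is irreducible but represents an element of word-length $1$, and no peak analysis will find an applicable rule inside it. What the converse actually requires is a saturation hypothesis on $T$ --- that $T$ contain every valid length-reducing rule read off the Cayley graph, as is true of the systems $\nabla_n(G,\Sigma)$ to which the lemma is later applied --- and that hypothesis is not part of the statement. For what it is worth, the paper's proof also only argues the forward direction in detail and asserts the converse in one line; your attempt has correctly isolated the exact point at which the converse, in this generality, breaks down, but the strategy you propose for repairing it cannot succeed without strengthening the hypotheses.
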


\begin{proof}
 Since $\Sigma$ is an inverse-closed, $M(\Sigma,T)$ is a group. Note that $(\Sigma,T)$ is confluent if and only if it is Church-Rosser (see \cite[Theorem 8.1 (d)]{DAM} or \cite[Lemma 1.17]{OttoBook}). That is, any word $v\in \Sigma^\ast$ rewrites to a unique form $w\in \Sigma^\ast$, which 
 is of minimal length since rules are length-reducing.
Now $v$ is the unique label of a geodesic path in the Cayley graph of the group $M(\Sigma,T)$ with finite inverse-closed generating set $\Sigma$. Hence, the result follows.
\end{proof}

\subsection{Group presentations}\label{subsec:grouppresentations}

A \emph{group presentation} is a pair $(X, R)$, usually written $\langle X \mid R \rangle$, comprising an alphabet $X$ and a set $R \subset (X \cup X^{-1})^\ast$ of words; the elements of $X$ are called \emph{(group) generators}, and the elements of $R$ are called \emph{relators}.
We denote by $\ggen{R}$ the smallest normal subgroup of the free group $F(X)$ that contains $R$. Note that $\ggen{R}=\gen{gr^{\pm 1} g^{-1}: g\in F(X), r\in R}$. 
The group $G$ presented by $\langle X \mid R \rangle$ is the quotient of $F(X)$ by $\ggen{R}$.  We note that, since $G$ is a quotient of $F(X)$, the words $x x^{-1}$ and $x^{-1}x$ spell the identity in $G$ for each generator $x$, and so these words need not appear in $R$.

\begin{lemma}\label{lem:rewritingsystemsandpresentations}
Let $(\Sigma,T)$ be 
an inverse-closed rewriting system and   $R=\{\ell r^{-1} \mid (\ell,r)\in T\}$.
Then 
the groups $M(\Sigma, T)$ and $G=\langle \Sigma \mid R\rangle$ isomorphic.      
\end{lemma}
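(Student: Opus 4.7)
The plan is to construct mutually inverse group homomorphisms between $M(\Sigma,T)$ and $G$ by invoking the universal properties of the free monoid $\Sigma^\ast$ and the free group $F(\Sigma)$. Let $\alpha\colon \Sigma^\ast \to G$ be the monoid homomorphism determined by sending each $x \in \Sigma$ to its image under $\Sigma \hookrightarrow F(\Sigma) \twoheadrightarrow G$; this exists by the universal property of the free monoid. To factor $\alpha$ through the projection $\Sigma^\ast \twoheadrightarrow M(\Sigma,T)$, I would show that $u \overset{\ast}{\leftrightarrow} v$ implies $\alpha(u) = \alpha(v)$. By induction on the length of a rewriting chain, this reduces to a single step $u\ell v \to urv$ with $(\ell,r) \in T$, which in turn reduces to showing $\alpha(\ell) = \alpha(r)$. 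Since $\ell r^{-1} \in R$, we have $\ell = r$ in $G$, as required. Hence $\alpha$ descends to a monoid homomorphism $\bar\alpha\colon M(\Sigma,T) \to G$, which is automatically a group homomorphism because $M(\Sigma,T)$ is a group by inverse-closure.

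For the reverse direction, I would apply the universal property of the free group $F(\Sigma)$ to produce a group homomorphism $\beta\colon F(\Sigma) \to M(\Sigma,T)$ determined by $x \mapsto [x]$ for each $x \in \Sigma$; this step uses crucially that $M(\Sigma,T)$ is a group, so each generator already has an inverse in the target. To show $\beta$ factors through $G = F(\Sigma)/\ggen{R}$, it suffices to show $\beta$ sends each relator $\ell r^{-1}$ to the identity of $M(\Sigma,T)$. Since $\beta$ is a group homomorphism, $\beta(\ell r^{-1}) = [\ell][r]^{-1}$, and $(\ell,r) \in T$ yields $[\ell] = [r]$ in $M(\Sigma,T)$. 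So $\beta(\ell r^{-1}) = 1$ and $\beta$ descends to $\bar\beta\colon G \to M(\Sigma,T)$.

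To finish, observe that $\bar\beta \circ \bar\alpha$ fixes each $[x] \in M(\Sigma,T)$ and $\bar\alpha \circ \bar\beta$ fixes the image of each $x \in \Sigma$ in $G$; since $\Sigma$ generates both $M(\Sigma,T)$ and $G$, the compositions are the respective identity maps, so $\bar\alpha$ and $\bar\beta$ are mutually inverse isomorphisms.

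The main (minor) obstacle I anticipate is a notational reconciliation: the symbol $r^{-1}$ in the relator $\ell r^{-1} \in R$ is most naturally read as the formal inverse in $F(\Sigma)$, whereas the paper also uses $w^{-1}$ for the ``inverse word'' built from chosen inverses in $\Sigma$. Both interpretations produce the same element in $G$ and the same element in $M(\Sigma,T)$, because the inverse-closure axiom forces the chosen inverse word for $r$ to represent $[r]^{-1}$ in $M(\Sigma,T)$; thus the argument above goes through under either reading, and the rest is bookkeeping via universal properties.
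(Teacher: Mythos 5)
Your proposal is correct, and the first half (the map $M(\Sigma,T)\to G$) coincides with the paper's: both show that a single rewriting step $u\ell v\to urv$ preserves the image in $G$ because $\ell r^{-1}$ is a relator. Where you genuinely diverge is in the other direction. The paper proves \emph{injectivity} of its map $\phi\colon M(\Sigma,T)\to G$ directly: it takes a word $w$ representing $1_G$, writes it (up to free equality) as a product $\prod_i u_i\gamma_i u_i^{-1}$ of conjugates of relators, and then explicitly rewrites that product to $\lambda$ using the rules of $T$ (each $\gamma_i=\ell_i r_i^{-1}$ rewrites to $r_i r_i^{-1}$ and then to $\lambda$, after which the $u_iu_i^{-1}$ cancel). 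You instead build the inverse homomorphism $\bar\beta\colon G\to M(\Sigma,T)$ abstractly, using that $M(\Sigma,T)$ is a group (which requires inverse-closure, exactly as you flag) and that $\beta$ kills every relator because $[\ell]=[r]$ whenever $(\ell,r)\in T$; the isomorphism then follows from checking both composites on generators. Your route is cleaner in one respect: the paper's computation silently passes from ``$w$ is freely equal to $\prod_i u_i\gamma_i u_i^{-1}$'' to ``$w\overset{\ast}{\leftrightarrow}\prod_i u_i\gamma_i u_i^{-1}$'', which itself needs inverse-closure to realize free cancellations as rewrites, whereas the universal-property argument never has to touch this. What the paper's version buys in exchange is an explicit demonstration of \emph{how} the rewriting system simulates membership in the normal closure, which is more in the hands-on spirit of the rest of the paper. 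Your closing remark about the two readings of $r^{-1}$ (formal inverse in $F(\Sigma)$ versus the inverse word over $\Sigma$) is the right thing to worry about, and your resolution --- that inverse-closure forces the inverse word of $r$ to represent $[r]^{-1}$ in $M(\Sigma,T)$ and $r^{-1}$ in $G$ --- is correct.
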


\begin{proof}
Recall that each element of $M(\Sigma,T)$ is an 
 $\overset{\ast}{\leftrightarrow}$-equivalence class. Define $\phi\colon M(\Sigma,T)\to G$
 to be a map which takes
  any word $w\in\Sigma^*$ in a $\overset{\ast}{\leftrightarrow}$-class and, considering $w$ as an element of $F(\Sigma)$,  computes its  image under the quotient map to $G$. We now explain why the map is a well-defined group homomorphism. Suppose $w_1\overset{\ast}{\leftrightarrow} w_2$. Then there exists a finite sequence
\[w_1=w_{1,0}\leftrightarrow w_{1,1}\leftrightarrow w_{1,2}\leftrightarrow\cdots\leftrightarrow w_{1,k}=w_2,\] where each step is the application (or reverse) of a rule in $T$, that is, for each $i,j$ with $|i-j|=1$, $w_{1,i}=u\ell v$, $w_{1,j}=urv$ for some  $u,v\in\Sigma^\ast $ and $(\ell,r)\in T$.
Since $u\ell v, u\ell r^{-1}rv,urv$ each represent the same element of $G$,  each 
 $w_{1,i}$ represents the same element of $G$ as $w_1$ and $w_2$.

It is clear that $\phi$ is 
  surjective, since $G$ is generated by $\Sigma$.  For injectivity, let $w\in \Sigma^\ast$ represent the identity of $G$. Hence, $w=\prod_{i=1}^{n} u_i \gamma_i u_i^{-1}$ for some $n\in \mathbb{N}$, $u_i\in \Sigma^\ast$ and $\gamma_i\in R$ (see \cite[Prop. 4.1.2]{BridsonGeomofWordProb}). We wish to show that $w$ represents the identity in $M(\Sigma,T)$, that is, $w\overset{\ast}{\leftrightarrow}\lambda$. 
Using the rewriting we 
find that\begin{align*}
 w&=u_1 \gamma_1 u_1^{-1} u_2 \gamma_2 u_2^{-1} \cdots u_n \gamma_n u_n^{-1} \\
    &=u_1 \ell_1r_1^{-1}  u_1^{-1}  u_2 \ell_2r_2^{-1} u_2^{-1}\cdots u_n\ell_n r_n^{-1} u_n^{-1} \\
    &\overset{\ast}{\to}u_1 r_1r_1^{-1}  u_1^{-1}  u_2 r_2r_2^{-1} u_2^{-1}\cdots u_nr_n r_n^{-1} u_n^{-1} \\
    &\overset{\ast}{\to} u_1   u_1^{-1}  u_2 u_2^{-1}\cdots u_n u_n^{-1} \overset{\ast}{\to}\lambda.
\end{align*}
Hence, $w$ represents the identity in $M(\Sigma,T)$ and so $\phi$ is injective.
\end{proof}

It is a standard fact in combinatorial group theory that 
the following transformations of group presentations preserve the groups they present up to isomorphism (see for example \cite[Pg. 89]{LyndonSchupp}). 

\begin{definition}\label{def:tietzetransformations}
    Given a presentation $\langle X\mid R\rangle$, the following transformations of presentations are called \emph{Tietze transformations}:
\begin{description}
	\item[(T1)]
	Replace $\langle X\mid R\rangle$ by $\langle X\mid R\cup S \rangle$, where $S\subset \ggen{R}$.
	\item[(T1$^{-1}$)]
Replace $\langle X\mid R\rangle$ by $\langle X \mid R\setminus S\rangle$, where $S\subset \ggen{R \setminus S} $.
	\item[(T2)] Replace $\langle X\mid R\rangle$ by $\langle X\cup Y\mid R\cup S\rangle$, where $Y\cap F(X)=\emptyset$ is a set of new symbols, 
    and $S=\{y^{-1}u_y\colon y\in Y, u_y\in (X\cup X^{-1})^\ast \}$.
	\item[(T2$^{-1}$)]
	Replace $\langle X\mid R\rangle$ by $\langle X\setminus Y\mid R\setminus S\rangle$, where $Y\subseteq X$ such that $S=\{y u_y\colon y\in Y, u_y\in ((X\setminus Y)\cup (X\setminus Y)^{-1})^\ast\}\subseteq R$.
    \end{description}
\end{definition}

\begin{example}\label{eg:C4Tietze}
The cyclic group of order four has presentation $\langle x \mid x^4\rangle$. Applying \textbf{(T2)} with $Y=\{y,z\}$ and $S=\{y^{-1}x^2, z^{-1}x^3\}$
yields a presentation
\begin{equation}
    \label{eq:presC4Tietze}
\langle x,y,z\mid x^4, y^{-1}x^2,z^{-1}x^3\rangle\end{equation}
for the same group (up to isomorphism).
Similarly, in \cref{eg:shortcutinfreegroups}, applying \textbf{(T2)} with $Y=\{z\}$ and $S=\{z^{-1}xy\}$ to  $\langle x,y\mid -\rangle$ gives the presentation whose corresponding Cayley graph is shown in \cref{fig:f2shortcut}. 
\end{example}
The following well-known lemma (see \cite[Pg. 174]{LyndonSchupp}) records a way in which a free product decomposition may be seen from a group presentation.

\begin{lemma}\label{lem:FreeProducts}
If $\langle X_1 \mid R_1\rangle$ and $\langle X_2 \mid R_2\rangle$ are group presentations such that $X_1 \cap X_2 = \emptyset$, then 
\[\langle X_1 \cup X_2 \mid R_1 \cup R_2 \rangle \cong \langle X_1 \mid R_1\rangle \ast \langle X_2 \mid R_2\rangle.\]
\end{lemma}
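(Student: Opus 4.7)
The plan is to construct mutually inverse homomorphisms between $G := \langle X_1 \cup X_2 \mid R_1 \cup R_2 \rangle$ and $G_1 \ast G_2$, where $G_i := \langle X_i \mid R_i \rangle$, using the universal properties of group presentations and of free products.

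First, I would build a homomorphism $\psi \colon G_1 \ast G_2 \to G$. The inclusion $X_i \hookrightarrow X_1 \cup X_2$ extends to a map $F(X_i) \to F(X_1 \cup X_2) \twoheadrightarrow G$; since $R_i \subseteq R_1 \cup R_2$, every relator of $G_i$ becomes trivial in $G$, so this descends to a homomorphism $\iota_i \colon G_i \to G$. The universal property of the free product then produces a unique $\psi$ with $\psi|_{G_i} = \iota_i$.

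Next, I would build $\phi \colon G \to G_1 \ast G_2$ in the reverse direction. Composing the canonical embedding $G_i \hookrightarrow G_1 \ast G_2$ with the quotient $F(X_i) \twoheadrightarrow G_i$, and combining over the disjoint union $X_1 \sqcup X_2$, gives a set map on $X_1 \cup X_2$ that extends to a homomorphism $F(X_1 \cup X_2) \to G_1 \ast G_2$. To show this descends to $G$, I would verify that each relator $r \in R_1 \cup R_2$ is killed: if $r \in R_i$, then $r$ is a word in $X_i$ that is already trivial in $G_i$ and hence in $G_1 \ast G_2$.

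Finally, I would check that $\phi \circ \psi$ and $\psi \circ \phi$ fix the generators of $G_1 \ast G_2$ and $G$, respectively, which suffices since generators determine homomorphisms. The main conceptual point (rather than a genuine obstacle) is the clean use of the hypothesis $X_1 \cap X_2 = \emptyset$: it guarantees that $F(X_1 \cup X_2) = F(X_1) \ast F(X_2)$ and that each relator in $R_1 \cup R_2$ lies entirely within one of the free factors, so the descent of maps and the application of the universal property of the free product are both straightforward.
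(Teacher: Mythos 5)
Your argument is correct: the two homomorphisms built from the universal properties of presentations and of free products are mutually inverse because they fix generators, and the disjointness hypothesis is used exactly where you say it is. The paper itself gives no proof of this lemma --- it simply cites it as well known (Lyndon--Schupp, p.~174) --- and the standard argument behind that citation is precisely the one you have written out.
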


\section{The construction}\label{sec:TheConstruction}

In this section we describe the key construction for the paper. 
Throughout this section, let 
 $G$  be a group with a finite inverse-closed generating set $\Sigma$, $(\Gamma,L) = \text{Cay}(G, \Sigma)$, and let $n$ be a natural number.   

First, let us introduce the following notation for the elements of $\Sigma$.  Let $\Sigma_1$ be the subset of $\Sigma$ comprising the elements of order two.  If $\Sigma_1 = \Sigma$, then define $\Sigma_2 = \Sigma_3 = \emptyset$; otherwise, let $\{\Sigma_2, \Sigma_3\}$ be a partition of $\Sigma \setminus \Sigma_1$ such that for each $x \in \Sigma_2$, we have $x^{-1} \in \Sigma_3$.  Let $m_1 = |\Sigma_1|$, $m_2 = |\Sigma_2|$, and introduce symbols for the elements of $\Sigma$ such that $\Sigma_1=\{a_1,\dots,a_{m_1}\}$, $\Sigma_2=\{b_1,\dots,b_{m_2}\}$, $\Sigma_3=\{c_1,\dots,c_{m_2}\}$, and $c_i$ is the inverse of $b_i$ in $G$ for all $i \in [1,m_2]$.

Next, we introduce a larger set of symbols which will become the labelling set for a subdivision  of $\Gamma$. 
Let $\Sigma_1(n) = \{a_{i, j} \mid i\in[1,m_1], j\in[1, 2n+1]\}$, $\Sigma_2(n) = \{b_{i, j} \mid i\in[1,m_2], j\in[1, 2n+1]\}$, $\Sigma_3(n) = \{c_{i, j} \mid i\in[ 1, m_2], j\in[1, 2n+1]\}$
and let $\Sigma(n) = \Sigma_1(n) \cup \Sigma_2(n) \cup \Sigma_3(n)$.

Let $\Gamma_n$ denote the graph obtained from $\Gamma$ by subdividing each edge into $2n+1$ sub-edges (as discussed in \cref{subsec:subdiv}). Let $L(n)$ be the $\Sigma(n)$-labelling of $\Gamma_n$ constructed such that the following holds:  for every edge $\{g, h\} \in E(\Gamma)$ with label $L((g,h))=x_i \in \Sigma_1 \cup \Sigma_2$, the path in $\Gamma_n$ from $g$ to $h$ of length $2n+1$ is labelled $x_{i, 1}, x_{i, 2}, \dots, x_{i, 2n+1}$; for every edge $\{g, h\} \in E(\Gamma)$ with label $L((g,h))=c_i \in \Sigma_3$, the path in $\Gamma_n$ from $g$ to $h$ of length $2n+1$ is labelled $c_{i, 2n+1}, c_{i, 2n}, \dots, c_{i,1}$.
Assume a total order on $\Sigma(n)^*$
has been fixed so that  \GamSlex{$\Gamma_n$} is well defined  as in 
 \cref{def:GammaSlex}.

We define a set of rewriting rules $T(n) \subset \Sigma(n)^\ast \times \Sigma(n)^\ast$ as follows: 
$(\ell, r)\in T(n)$ 
if and only if one of the following is true: 
\begin{enumerate}
\item [$(\mathcal{R}_1)$] $\ell$ is the label on a path $(u, v, u)$ for some pair of adjacent vertices $u$ and $v$, and $r = \lambda$;
\item [$(\mathcal{R}_2)$] $\ell$ is the label on a path $\alpha$, $r$ is the label on a path $\beta$, $\alpha\beta^{-1}$ is 
 an embedded circuit  of odd length, $|\alpha| = |\beta| + 1$, and $\beta$ is \GamSlex{$\Gamma_n$}  
\item [$(\mathcal{R}_3)$] $\ell$ is the label on a path $\alpha$, $r$ is the label on a path $\beta$, $\alpha\beta^{-1}$ is 
 an embedded circuit of length at least $4$, $|\alpha| = |\beta|$, and $\beta$ is \GamSlex{$\Gamma_n$}.
\end{enumerate}
Finally, we write $\nabla_n(G,\Sigma)$ for the rewriting system $(\Sigma(n),T(n))$.

\begin{remark}\label{rem:ConstructionRemark}
Note that rules of type $(\mathcal{R}_1)$ and $(\mathcal{R}_2)$ never label circuits made up of two distinct geodesics with the same initial and terminal vertices. Hence, $\Gamma_n$ is geodetic if and only if there are no rules of type $(\mathcal{R}_3)$ in $\nabla_n(G,\Sigma)$.  
\end{remark}

\begin{example}\label{eg:nabla1ofC4}
Let $G$ denote the cyclic group of order 4, and 
recall the presentation \cref{eq:presC4Tietze} for $G$ given in  \cref{eg:C4Tietze}. 
Let $\Sigma = \{x, y, z\}$ and $n=1$. Since $\Gamma = \text{Cay}(G, \Sigma)$ is the complete graph $K_4$, $G$ is geodetic. The graph $\Gamma$ with $\Sigma$-labelling $L$ is shown in \cref{fig:biggerk4}.

\begin{figure}[ht] 
\begin{center} \scalebox{0.75}{
           \begin{tikzpicture}
\node[below] at (0,0) {$1_G$};
\node[above left] at (0,0.2)  {\textcolor{blue}{$x$}};
\node[left] at (-0.2,0)  {\textcolor{red}{$z$}};
\node[circle, draw, fill=black,inner sep=1.3pt] (A) at (0,0) {};
\node[left] at (-5,3) {$x$};
\node[below] at (-5,3) {\textcolor{blue}{$x$}};
\node[right] at (-4.7,2.9)  {\textcolor{red}{$z$}};
\node[circle, draw, fill=black,inner sep=1.3pt] (B) at (-5,3) {};
\node[right] at (3.5,5) {$y$};
\node[left] at (3.3,5.1)  {\textcolor{blue}{$x$}};
\node[above] at (3.5,5.1)  {\textcolor{red}{$z$}};
\node[circle, draw, fill=black,inner sep=1.3pt] (C) at (3.5,5) {};
\node[above] at (0,8) {$z$};
\node[right] at (0.2,8)  {\textcolor{blue}{$x$}};
\node[below left] at (0,7.8)  {\textcolor{red}{$z$}};
\node[circle, draw, fill=black,inner sep=1.3pt] (D) at (0,8) {};

 \path[thick,{Stealth[color=red]}-{Stealth[color=blue]}] (A) edge  (B);

  \path[thick,{Stealth[color=ForestGreen]}-{Stealth[color=ForestGreen]}] (D) edge  node[above left] {\textcolor{ForestGreen}{$y$}}  (B);

    \path[thick,{Stealth[color=ForestGreen]}-{Stealth[color=ForestGreen]}] (A) edge node[below right]  {\textcolor{ForestGreen}{$y$}}  (C);

 \path[thick,{Stealth[color=red]}-{Stealth[color=blue]}] (C) edge (D);

 \path[thick,{Stealth[color=blue]}-{Stealth[color=red]}] (A) edge  (D);

 \path[thick,dashed,{Stealth[color=red]}-{Stealth[color=blue]}] (B) edge (C);

\end{tikzpicture}}
\end{center}
\caption{$\Gamma=\text{Cay}(G,\Sigma)$ with  $\Sigma$-labelling $L$.}
   
   \label{fig:biggerk4}
\end{figure}
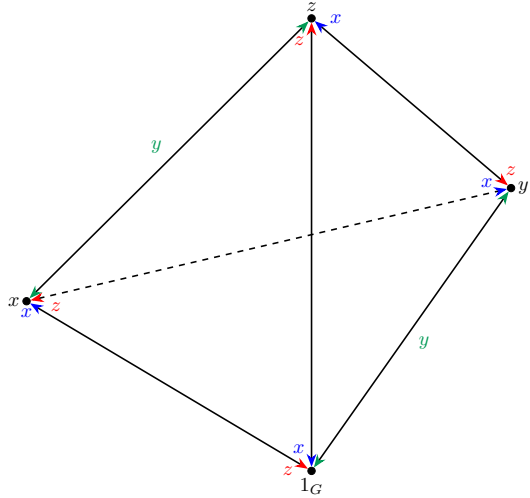

We have $\Sigma_1 = \{y\}, \Sigma_2 = \{x\}$ and $\Sigma_3 = \{z\}$. Following the notation in the construction we let $a_1 = y$, $b_1 = x$ and $c_1 = z$ and so we have $\Sigma_1(1) = \{a_{1, 1}, a_{1, 2}, a_{1, 3}\}$, $\Sigma_2(1) = \{b_{1, 1}, b_{1, 2}, b_{1, 3}\}$, $\Sigma_3(1) = \{c_{1, 1}, c_{1, 2}, c_{1, 3}\}$ and \[\Sigma(1) = \{a_{1, 1}, a_{1, 2}, a_{1, 3}, b_{1, 1}, b_{1, 2}, b_{1, 3}, c_{1, 1}, c_{1, 2}, c_{1, 3}\}.\]  The graph $\Gamma(1)$ with $\Sigma(1)$-labelling $L(1)$ is shown in \cref{fig:biggerk4subdiv}. 

\begin{figure}[ht] 
   \begin{center}
 \scalebox{1.25}{\begin{tikzpicture}
\node[below] at (0,0) {$1_G$};
\node[above left] at (0,0.2)  {\tiny{\textcolor{blue}{$b_{1,3}$}}};
\node[left] at (-0.2,0)  {\tiny{\textcolor{red}{$c_{1,1}$}}};
\node[right] at (0.2,0)  {\tiny{\textcolor{Peach}{$a_{1,3}$}}};
\node[circle, draw, fill=black,inner sep=1.3pt] (A) at (0,0) {};
\node[left] at (-5,3) {$x$};
\node[below] at (-4.9,2.9) {\tiny{\textcolor{blue}{$b_{1,3}$}}};
\node[right] at (-4.7,2.9)  {\tiny{\textcolor{red}{$c_{1,{1}}$}}};
\node[above right] at (-5.3,3.2)  {\tiny{\textcolor{Peach}{$a_{1,3}$}}};
\node[circle, draw, fill=black,inner sep=1.3pt] (B) at (-5,3) {};
\node[right] at (3.5,5) {$y$};
\node[left] at (3.3,5.1)  {\tiny{\textcolor{blue}{$b_{1,3}$}}};
\node[above] at (3.5,5.1)  {\tiny{\textcolor{red}{$c_{1,{1}}$}}};
\node[below right] at (3.3,4.8)  {\tiny{\textcolor{Peach}{$a_{1,3}$}}};

\node[circle, draw, fill=black,inner sep=1.3pt] (C) at (3.5,5) {};
\node[above] at (0,8) {$z$};
\node[right] at (0.2,8)  {\tiny{\textcolor{blue}{$b_{1,3}$}}};
\node[below] at (0.3,7.7)  {\tiny{\textcolor{red}{$c_{1,{1}}$}}};
\node[left] at (-0.2,7.9)  {\tiny{\textcolor{Peach}{$a_{1,3}$}}};
\node[circle, draw, fill=black,inner sep=1.3pt] (D) at (0,8) {};

\draw[thick,color=white] (A) -- (B)
node(AB1)[pos=1/3,draw,color=gray,circle,fill=gray,inner sep=1.3pt]{} 
node(AB2)[pos=2/3,draw,color=gray,circle,fill=gray,inner sep=1.3pt]{} 
;
\node at (-1.5,0.6)  {\tiny{\textcolor{blue}{$b_{1,1}$}}};
\node at (-3.2,1.6)  {\tiny{\textcolor{blue}{$b_{1,2}$}}};
\node at (-2,0.8)  {\tiny{\textcolor{red}{$c_{1,2}$}}};
\node at (-3.8,1.9)  {\tiny{\textcolor{red}{$c_{1,3}$}}};

\draw[thick,color=white] (A) -- (C)
node(AC1)[pos=1/3,draw,color=gray,circle,fill=gray,inner sep=1.3pt]{} 
node(AC2)[pos=2/3,draw,color=gray,circle,fill=gray,inner sep=1.3pt]{} 
;
\node[below right] at (1.1,1.6)  {\tiny{\textcolor{Sepia}{$a_{1,1}$}}};
\node[above right] at (2.4,3.1)  {\tiny{\textcolor{Sepia}{$a_{1,1}$}}};

\draw[thick,color=white] (A) -- (D)
node(AD1)[pos=1/3,draw,color=gray,circle,fill=gray,inner sep=1.3pt]{} 
node(AD2)[pos=2/3,draw,color=gray,circle,fill=gray,inner sep=1.3pt]{} 
;
\node at (-0.3,3)  {\tiny{\textcolor{blue}{$b_{1,2}$}}};
\node at (-0.3,5.5)  {\tiny{\textcolor{blue}{$b_{1,1}$}}};
\node at (-0.3,2.5)  {\tiny{\textcolor{red}{$c_{1,{3}}$}}};
\node at (-0.3,5.1)  {\tiny{\textcolor{red}{$c_{1,2}$}}};

\draw[thick,color=white] (B) -- (C)
node(BC1)[pos=1/3,draw,color=gray,circle,fill=gray,inner sep=1.3pt]{} 
node(BC2)[pos=2/3,draw,color=gray,circle,fill=gray,inner sep=1.3pt]{} 
;
\node at (-2.3,3.3)  {\tiny{\textcolor{blue}{$b_{1,1}$}}};
\node at (0.5,4)  {\tiny{\textcolor{blue}{$b_{1,2}$}}};
\node at (-1.7,3.4)  {\tiny{\textcolor{red}{$c_{1,2}$}}};
\node at (1,4.1)  {\tiny{\textcolor{red}{$c_{1,3}$}}};

\draw[thick,color=white] (B) -- (D)
node(BD1)[pos=1/3,draw,color=gray,circle,fill=gray,inner sep=1.3pt]{} 
node(BD2)[pos=2/3,draw,color=gray,circle,fill=gray,inner sep=1.3pt]{} 
;
\node at (-3.8,4.7)  {\tiny{\textcolor{Sepia}{$a_{1,1}$}}};
\node at (-1.8,6.6)  {\tiny{\textcolor{Sepia}{$a_{1,1}$}}};

\draw[thick,color=white] (C) -- (D)
node(CD1)[pos=1/3,draw,color=gray,circle,fill=gray,inner sep=1.3pt]{} 
node(CD2)[pos=2/3,draw,color=gray,circle,fill=gray,inner sep=1.3pt]{} 
;

\node at (1.7,7)  {\tiny{\textcolor{blue}{$b_{1,2}$}}};
\node at (2.8,6)  {\tiny{\textcolor{blue}{$b_{1,1}$}}};
\node at (1.2,7.4)  {\tiny{\textcolor{red}{$c_{1,{3}}$}}};
\node at (2.3,6.4)  {\tiny{\textcolor{red}{$c_{1,2}$}}};

\path[thick,{Stealth[color=red]}-{Stealth[color=blue]}] (A) edge  (AB1);
\path[thick,{Stealth[color=red]}-{Stealth[color=blue]}] (AB1) edge (AB2);
\path[thick,{Stealth[color=red]}-{Stealth[color=blue]}] (AB2) edge  (B);

\path[thick,{Stealth[color=red]}-{Stealth[color=blue]}] (C) edge (CD1);
\path[thick,{Stealth[color=red]}-{Stealth[color=blue]}] (CD1) edge (CD2);
\path[thick,{Stealth[color=red]}-{Stealth[color=blue]}] (CD2) edge  (D);

\path[thick,dashed,{Stealth[color=red]}-{Stealth[color=blue]}] (B) edge (BC1);
\path[thick,dashed,{Stealth[color=red]}-{Stealth[color=blue]}] (BC1) edge (BC2);
\path[thick,dashed,{Stealth[color=red]}-{Stealth[color=blue]}] (BC2) edge  (C);

\path[thick,{Stealth[color=blue]}-{Stealth[color=red]}] (A) edge  (AD1);
\path[thick,{Stealth[color=blue]}-{Stealth[color=red]}] (AD1) edge  (AD2);
\path[thick,{Stealth[color=blue]}-{Stealth[color=red]}] (AD2) edge   (D);

\path[thick,{Stealth[color=Peach]}-{Stealth[color=Sepia]}] (A) edge   (AC1);
\path[thick,{Stealth[color=ForestGreen]}-{Stealth[color=ForestGreen]}] (AC1) edge node[below right] {\textcolor{ForestGreen}{\tiny{$a_{1,2}$}}}  (AC2);
\path[thick,{Stealth[color=Sepia]}-{Stealth[color=Peach]}] (AC2) edge  (C);

\path[thick,{Stealth[color=Peach]}-{Stealth[color=Sepia]}] (B) edge (BD1);
\path[thick,{Stealth[color=ForestGreen]}-{Stealth[color=ForestGreen]}] (BD1) edge node[above left] {\textcolor{ForestGreen}{\tiny{$a_{1,2}$}}}  (BD2);
\path[thick,{Stealth[color=Sepia]}-{Stealth[color=Peach]}] (BD2) edge   (D);

\end{tikzpicture}}
\end{center}
\caption{$\Gamma(1)$ with $\Sigma(1)$-labelling $L(1)$.}
   \label{fig:biggerk4subdiv}
\end{figure}

As a demonstration, here are 
 some of the rewriting rules in $T(1)$:
\begin{multline*}
T(1) = \{\underset{\text{
rules of type $\mathcal{R}_1$, each coming from traversing an edge back and forth}}{\underbrace{(a_{1, 1} a_{1, 3}, \lambda), \;\;\; (a_{1, 3} a_{1, 1}, \lambda),  \;\;\; (a_{1, 2} a_{1, 2}, \lambda), \;\;\; (b_{1, 1} c_{1, 1}, \lambda)  \;\;\;(c_{1, 1} b_{1, 1}, \lambda), \;\;\; \dots,}}\\ \underset{\text{
rules of type $\mathcal{R}_2$ that describe shortcuts in odd-length embedded circuits}}{ \underbrace{(a_{1, 1} a_{1, 2} a_{1, 3} b_{1, 1} b_{1, 2}, \ {c_{1,3}} c_{1, 2} c_{1,1} c_{1, 3}),\;\;\; 
(a_{1, 2} a_{1, 3} b_{1, 1} b_{1, 2} b_{1, 3}, \ a_{1, 3} c_{1, 3} c_{1, 2} c_{1,1}), \;\;\; 
\dots }}\}
\end{multline*}
Recall  (\cref{rem:ConstructionRemark}) since $\Gamma$ is geodetic there are no rules of type $(\mathcal{R}_3)$ in $\nabla_1(G,\Sigma)$. 
\end{example}

\begin{lemma}\label{lem:geodesictominimal}
    Let $w$ be the label of a geodesic path in $\Gamma_n$. Let $w'$ be the label of a 
    \GamSlex{$\Gamma_n$} 
    path that shares the same initial and terminal vertices as $w$. Then $w\overset{\ast}{\to} w'$.
\end{lemma}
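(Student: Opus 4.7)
The plan is to induct on $|w|-\kappa$, where $\kappa$ denotes the length of the longest common prefix of $w$ and $w'$ as words over $\Sigma(n)$. First I would note that $|w|=|w'|$: shortlex-minimality of $w'$ forces $|w'|\leq |w|$, while $w$ being a geodesic label forces the reverse inequality. The paths $\alpha$ and $\beta$ labelled by $w$ and $w'$ are therefore both geodesics and hence embedded (any repeated vertex in a geodesic allows excision of a sub-circuit, producing a shorter path). The base case $\kappa=|w|$ gives $w=w'$ immediately, so $w \overset{\ast}{\to} w'$ trivially.

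For the inductive step, assume $w\neq w'$ and let $v$ be the common vertex at which $\alpha$ and $\beta$ first diverge. Let $u$ be the first vertex along $\alpha$ after $v$ that also appears on $\beta$; such a $u$ exists because $\alpha$ and $\beta$ share the terminal vertex. Writing $\alpha_0$ and $\beta_0$ for the sub-paths of $\alpha$ and $\beta$ from $v$ to $u$, I would verify that $\alpha_0\beta_0^{-1}$ is an embedded circuit: the choice of $u$ together with embeddedness of $\alpha$ and $\beta$ rules out coincidences among the relevant vertices, and simplicity of $\Gamma_n$ (inherited from the simple graph $\Gamma$) excludes the degenerate case $|\alpha_0|=|\beta_0|=1$, which would force $\alpha_0=\beta_0$, contradicting divergence at $v$. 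Since $\alpha_0$ and $\beta_0$ are sub-paths of geodesics with the same endpoints, $|\alpha_0|=|\beta_0|\geq 2$, so the embedded circuit has length at least $4$.

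The technical heart of the argument is to show that $\beta_0$ is itself \GamSlex{$\Gamma_n$}: otherwise, substituting a strictly shortlex-smaller label of a path from $v$ to $u$ for $L(\beta_0)$ inside $w'$ would produce a word of no greater length labelling a path from the initial to terminal vertex of $\beta$, and strictly smaller in shortlex order than $w'$; this contradicts $\beta$ being \GamSlex{$\Gamma_n$}. Once this is in place, all hypotheses of rule $(\mathcal R_3)$ are satisfied, so $(L(\alpha_0),L(\beta_0))\in T(n)$. Applying this rule to $w$ produces a word $w_1$ of the same length, still labelling a geodesic from the initial to terminal vertex of $\alpha$, that agrees with $w'$ on the entire labelled prefix up to $u$. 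Thus the common-prefix length strictly increases by at least $|\beta_0|\geq 2$, and the inductive hypothesis yields $w_1 \overset{\ast}{\to} w'$, hence $w\to w_1 \overset{\ast}{\to} w'$.

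I expect the main subtlety to be in choosing the correct inductive measure (common-prefix length rather than, say, Hamming distance), so that a single rewrite makes monotone progress toward $w'$, and in pinning down the claim that the sub-path $\beta_0$ inherits shortlex-minimality from $\beta$. The latter rests on the essentially routine but easily-overlooked fact that shortlex order is preserved under substitution of a factor by a shortlex-smaller factor of no greater length; the rest of the proof is bookkeeping about which rule type applies and how the rewrite changes the path.
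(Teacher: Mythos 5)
Your proof is correct, and it takes a somewhat different route from the paper's. The paper's argument is target-agnostic: it repeatedly scans $w$ for the leftmost minimal factor that is not \GamSlex{$\Gamma_n$}, replaces it by the \GamSlex{$\Gamma_n$} label of a path with the same endpoints via an $(\mathcal{R}_3)$ rule, and deduces termination from the fact that shortlex strictly decreases at each step (a well-ordering argument); $w'$ only enters at the end, via uniqueness of the \GamSlex{$\Gamma_n$} path between two vertices. You instead work relative to the target: you rewrite the factor of $w$ lying between the first divergence vertex $v$ and the first reconvergence vertex $u$ of the two geodesics, and induct on the length of the common prefix of $w$ and $w'$, which your rewrite increases by $|\beta_0|\geq 2$. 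What your version buys is an explicit verification of the hypotheses of $(\mathcal{R}_3)$ that the paper simply asserts (``there exists a path $\gamma_0$ \dots such that $\gamma\gamma_0^{-1}$ labels an embedded circuit of even length at least $4$''): your choice of $u$ as the first reconvergence point, together with embeddedness of geodesics, is exactly what makes $\alpha_0\beta_0^{-1}$ embedded; simplicity of $\Gamma_n$ rules out length $2$; and the factor-substitution argument showing $\beta_0$ inherits \GamSlex{$\Gamma_n$}-minimality from $\beta$ is the key point you correctly identify as routine but easy to overlook. (One small remark: since the slex-minimal path between two vertices is unique, your $\beta_0$ coincides with the path the paper calls $\gamma_0$, so the two proofs apply the same rules, just selected and sequenced differently.) What the paper's target-agnostic scan buys is that the identical procedure is reused in the next lemma for arbitrary, non-geodesic $w$, where a direct comparison with $w'$ would be messier because lengths no longer match. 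Both termination arguments are sound.
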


\begin{proof}
 If $w$ is a label of a \GamSlex{$\Gamma_n$} path, we are done.
If not, scan $w$ from left to right until the prefix labels a path that
is not \GamSlex{$\Gamma_n$}. Take this prefix and scan right to left until the suffix labels a path  $\gamma$ that is not \GamSlex{$\Gamma_n$}. Let $p$ be the label of $\gamma$. There exists a path $\gamma_0$ with the same endpoints as $\gamma$,  that is \GamSlex{$\Gamma_n$}, such that $\gamma\gamma_0^{-1}$ labels an embedded circuit of even length at least $4$. Let $p_0$ be the label of $\gamma_0$. Hence $(p,p_0)\in T(n)$ is of type ($\mathcal{R}_3$). Let $w_1$ be the word $w$ with the $p$ factor replaced by $p_0$. If $w_1$ is a label of a \GamSlex{$\Gamma_n$} path we are done, otherwise repeat. The procedure will terminate as $\leq$ is a total order on $\Sigma(n)^\ast$, at each step there is a reduction with respect to $\leq$, and the length of words is bounded below by $0$.
\end{proof}

\begin{lemma}\label{lem:pathtoslex}
    Let $w$ be the label of a path in $\Gamma_n$. Let $w''$ be the label of a \GamSlex{$\Gamma_n$} path with the same initial and terminal vertices as $w$. Then $w\overset{\ast}{\to} w''$.
\end{lemma}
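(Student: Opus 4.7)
Plan. I argue by strong induction on $|w|$ to first reduce $w$ to a label of some geodesic between the same endpoints as $w$; one final invocation of \cref{lem:geodesictominimal} then produces $w''$. The case $|w|=0$ is trivial, and if $w$ already labels a geodesic we skip directly to \cref{lem:geodesictominimal}. So let $\gamma=(v_0,\dots,v_k)$ be the path labeled by $w$. If $\gamma$ has a backtracking, meaning $v_{i-1}=v_{i+1}$ for some $i$, then the two consecutive letters of $w$ at that position label the path $(v_{i-1},v_i,v_{i-1})$ and hence match the left-hand side of a rule of type $(\mathcal{R}_1)$; applying it reduces $|w|$ by $2$ and the inductive hypothesis finishes.

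Suppose instead that $\gamma$ has no backtracking but is not a geodesic. Choose a shortest non-geodesic subpath $\gamma_{[I,J]}$ of length $L=J-I$. Minimality forces $\gamma_{[I,J]}$ to be embedded, $\gamma_{[I,J-1]}$ and $\gamma_{[I+1,J]}$ to be geodesics of length $L-1$, and hence $d(v_I,v_J)\in\{L-1,L-2\}$. In the case $d(v_I,v_J)=L-1$, let $\beta^*$ be the \GamSlex{} geodesic from $v_I$ to $v_J$, which has length $L-1$. A shared interior vertex $v_{I'}$ of $\beta^*$ and $\gamma_{[I,J]}$ would give $L=(I'-I)+(J-I')\le |\beta^*|=L-1$, impossible, so $\gamma_{[I,J]}(\beta^*)^{-1}$ is an embedded odd circuit of length $2L-1$ and the factor of $w$ labelling $\gamma_{[I,J]}$ matches the left-hand side of a rule of type $(\mathcal{R}_2)$. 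Its application reduces $|w|$ by $1$, and the inductive hypothesis finishes.

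In the case $d(v_I,v_J)=L-2$, pick a geodesic $\sigma$ of length $L-2$ from $v_I$ to $v_J$; then $\mu_2:=\sigma\cdot(v_J,v_{J-1})$ is a length-$(L-1)$ geodesic from $v_I$ to $v_{J-1}$, distinct from $\gamma_{[I,J-1]}$ (else $\gamma$ would backtrack at $v_J$), and the same interior-vertex argument shows that $\gamma_{[I,J-1]}\mu_2^{-1}$ is an embedded even circuit of length $2L-2\ge 4$. The idea is to apply \cref{lem:geodesictominimal} to the factor of $w$ labelling $\gamma_{[I,J-1]}$ (which is a geodesic label), rewriting it to the \GamSlex{} label $\beta^*$ between $v_I$ and $v_{J-1}$, and then to use the hypothesis that the chosen total order makes \GamSlex{} well-defined throughout $\Gamma_n$ to argue that $\beta^*$ must end with the edge $(v_J,v_{J-1})$; this creates a backtracking at the position $v_{J-1}$, which $(\mathcal{R}_1)$ removes, reducing $|w|$ by $2$. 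Once the inductive process has produced a geodesic label, \cref{lem:geodesictominimal} completes the rewriting to $w''$.

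The principal technical obstacle is the $d(v_I,v_J)=L-2$ sub-case: the reduction is not a single rewrite but a composition of an application of \cref{lem:geodesictominimal} (i.e.\ a sequence of $(\mathcal{R}_3)$ rules) with $(\mathcal{R}_1)$, and its correctness rests on the delicate claim that the \GamSlex{} geodesic $\beta^*$ from $v_I$ to $v_{J-1}$ terminates in the edge $(v_J,v_{J-1})$, for which the well-definedness hypothesis on the ordering is essential. The backtracking case and the $d=L-1$ sub-case, by contrast, are routine applications of the graph-geodesic interior-vertex argument.
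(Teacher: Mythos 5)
Your overall strategy --- repeatedly locate a minimal non-geodesic factor of $w$, eliminate it with a rule of $T(n)$, and finish with \cref{lem:geodesictominimal} once a geodesic label is reached --- has the same skeleton as the paper's proof, and your backtracking case and your $d(v_I,v_J)=L-1$ sub-case are correct (indeed more detailed than the paper's one-line assertion that the minimal non-geodesic factor, paired with the \GamSlex{$\Gamma_n$} path between its endpoints, forms a rule of $T(n)$). The problem is the $d(v_I,v_J)=L-2$ sub-case, and the gap you yourself flag there is genuine and not repairable as stated. The well-definedness of \GamSlex{$\Gamma_n$} only says that among all labels of paths from $v_I$ to $v_{J-1}$ there is a unique $\leq_{\text{slex}}$-minimal one; it gives no control over \emph{which} geodesic realises it. Nothing forces $\beta^*$ to end with the edge $(v_J,v_{J-1})$: the slex geodesic could be $\gamma_{[I,J-1]}$ itself, in which case \cref{lem:geodesictominimal} rewrites nothing, no backtracking is created, and your induction makes no progress at all; or it could be a third geodesic avoiding $v_J$ entirely. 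So the reduction in this sub-case is not established.

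A secondary point: your claim that $\gamma_{[I,J-1]}\mu_2^{-1}$ is embedded ``by the same interior-vertex argument'' also does not go through, since for two geodesics of \emph{equal} length a common interior vertex $v$ only yields $|\mu_2|\ge d(v_I,v)+d(v,v_{J-1})=L-1=|\mu_2|$, which is no contradiction. (This is not load-bearing as you wrote the argument, but it would become so if you tried to repair the sub-case by applying a single $(\mathcal{R}_3)$ rule to that circuit.) For comparison, the paper avoids your case split: it replaces the whole minimal non-geodesic factor $p$ by the label $p_0$ of the \GamSlex{$\Gamma_n$} path between its endpoints, asserts $(p,p_0)\in T(n)$, and derives termination from the total order rather than from a length-based induction. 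Your analysis in fact pinpoints where that assertion requires justification: in your $d=L-2$ configuration the slex path between the endpoints of $p$ is two letters shorter than $p$, which is not the shape of any rule in $T(n)$, so this is exactly the configuration any complete proof must dispose of. Your instinct that this is ``the principal technical obstacle'' is right, but your proposed resolution of it does not work.
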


\begin{proof}
      If $w$ is a label of a \GamSlex{$\Gamma_n$} path, we are done. If $w$ is a label of a geodesic path, apply \cref{lem:geodesictominimal} and we are done. Otherwise, scan $w$ from left to right, until a prefix is not a geodesic. Now scan the prefix from right to left until a suffix is not a geodesic. Let $p$ be this suffix. Either $p$ is a label on a path $(u,v,u)$ for some pair of adjacent vertices $u$ and $v$, and so we replace the path labelled by $p$ by the empty path using the rule $(p,\lambda)$, or there exists a path labelled by $p_0$ that is \GamSlex{$\Gamma_n$} and $pp_0^{-1}$ labels  an embedded circuit in $\Gamma_n$, so $(p,p_0)\in T(n)$, and replace the path labelled by $p$ by the path labelled by $p_0$. Let $w_1$ be the label of the word obtained. If $w_1$ is a label of a \GamSlex{$\Gamma_n$} path, we are done. If $w_1$ is a label of a geodesic path, apply \cref{lem:geodesictominimal} and we are done. Otherwise we repeat the process. The procedure will terminate as $\leq$ is a total order on $\Sigma(n)^\ast$, at each step there is a reduction with respect to $\leq$, and word length is bounded below by $0$.
\end{proof}

\begin{lemma}\label{lem:nablaisICRS}
   Let $G$ be a group with finite inverse-closed generating set $\Sigma$. For each $n\in \mathbb{N}$, $\nabla_n(G,\Sigma)$ is an inverse-closed confluent rewriting system. 
\end{lemma}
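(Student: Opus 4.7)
The plan is to establish (i) inverse-closedness and (ii) confluence of $\nabla_n(G, \Sigma) = (\Sigma(n), T(n))$ separately.

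For (i), existence of inverses is immediate: each letter $x \in \Sigma(n)$ labels some edge $\{u, v\}$ of $\Gamma_n$, and the reverse label $y = L(n)((v, u)) \in \Sigma(n)$ gives the rules $(xy, \lambda), (yx, \lambda) \in T(n)$ of type $(\mathcal{R}_1)$, so $y$ is an inverse of $x$ in $M(\Sigma(n), T(n))$. To show that no single letter is $\overset{*}{\leftrightarrow}$-equivalent to $\lambda$, I would use that the left action of $G$ on $\Gamma$ extends to a label-preserving action of $G$ on $(\Gamma_n, L(n))$. By this $G$-equivariance, every rule $(\ell, r) \in T(n)$ satisfies: for each $v \in V(\Gamma_n)$, $\ell$ labels a path starting at $v$ if and only if $r$ does, with matching terminal vertex. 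Iterating, the set of starting vertices from which $w$ labels a path is an $\overset{*}{\leftrightarrow}$-invariant of $w$. Since $\lambda$ trivially labels a path from every vertex, while each $x \in \Sigma(n)$ is available as a forward edge-label only at a single proper $G$-orbit of vertices, these invariants differ, giving $x \not\overset{*}{\leftrightarrow} \lambda$.

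For (ii), I would apply Newman's lemma, establishing termination and local confluence. Termination follows from the shortlex well-ordering on $\Sigma(n)^{*}$ induced by the fixed total order on $\Sigma(n)$: rules of types $(\mathcal{R}_1)$ and $(\mathcal{R}_2)$ strictly reduce length, while rules of type $(\mathcal{R}_3)$ preserve length but strictly decrease the lex order, since $\beta$ being \GamSlex{$\Gamma_n$} makes $r$ strictly lex-smaller than $\ell$. For local confluence, suppose $w \to u$ via rule $(\ell_1, r_1)$ at factor $\ell_1$ and $w \to v$ via rule $(\ell_2, r_2)$ at factor $\ell_2$, and analyse the three cases based on the relative position of $\ell_1, \ell_2$ in $w$: (a) disjoint; (b) nested, $\ell_1 = p \ell_2 q$; or (c) properly overlapping, $\ell_1 = EF$ and $\ell_2 = FG$ sharing a middle factor $F$. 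Case (a) is routine: apply the other rule independently. For cases (b) and (c), the $G$-equivariance of the rule structure implies that after substitution the modified factor ($p r_2 q$ in case (b), and $r_1 G$ or $E r_2$ in case (c)) labels a path in $\Gamma_n$ with the same endpoints as the factor it replaces; then \cref{lem:pathtoslex} reduces both $u$ and $v$ to the unique \GamSlex{$\Gamma_n$} label at the sub-factor level, giving a common reduct.

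The main obstacle will be the overlap case (c), where one must verify that the shared factor $F$ of $\ell_1 = EF$ and $\ell_2 = FG$ forces the underlying paths of $\ell_1$ and $\ell_2$ in $\Gamma_n$ to glue coherently---so that at every valid starting vertex, the combined word $EFG$ traces a single path whose sub-replacements are consistent. This requires a careful check that the labelling $L(n)$ is sufficiently rigid (each label and starting vertex determine at most one outgoing edge) to force the two rule instances to align on the overlap. Once this is established, Newman's lemma combined with termination yields confluence of $\nabla_n(G, \Sigma)$, completing the proof.
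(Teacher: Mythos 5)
Your proposal is correct in outline but takes a genuinely different route to confluence than the paper. The paper argues globally and never touches critical pairs: if $u \overset{\ast}{\leftarrow} w \overset{\ast}{\rightarrow} v$, then $u$ and $v$ label paths in $\Gamma_n$ with the same endpoints as $w$, and \cref{lem:pathtoslex} rewrites each to the unique \GamSlex{$\Gamma_n$} label for that endpoint pair; this proves confluence directly, with no Newman's lemma and hence no separate termination argument. Your route is heavier, but it makes explicit the one fact both arguments secretly rest on: that $(\Gamma_n, L(n))$ is deterministic (a letter and a start vertex determine at most one edge) and $G$-equivariant, so overlapping redexes glue into a single path-labelled factor and rewriting preserves path endpoints. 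The paper leaves this implicit in the assertion that $u$ and $v$ label paths with the same endpoints as $w$, so your ``main obstacle'' in case (c) is not extra work created by your approach --- it is the same verification, surfaced; once done, resolving the overlap by applying \cref{lem:pathtoslex} to the glued factor is sound, and your termination argument (shortlex strictly decreases, since $(\mathcal{R}_3)$ right-hand sides are \GamSlex{$\Gamma_n$} hence lex-smaller at equal length) is correct. On inverse-closedness you are more careful than the paper, which exhibits inverses but never checks that no generator is $\overset{\ast}{\leftrightarrow}$-equivalent to $\lambda$. One small repair there: the invariant ``set of starting vertices from which $w$ labels a path'' fails to separate $x$ from $\lambda$ when $n=0$, since then $V(\Gamma_0)=G$ is a single orbit and every letter is readable from every vertex; use instead the relation of (start, end) pairs you already mention --- $x$ moves every start vertex because $1_G \notin \Sigma$, while $\lambda$ fixes it.
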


\begin{proof}
The inverse of each $a_{i,j}\in \Sigma_1(n)$ is $a_{i,2n+2-j}\in \Sigma_1(n)$. The inverse of each $b_{i,j}\in \Sigma_2(n)$ is $c_{i,2n+2-j}\in \Sigma_3(n)$. Hence, each element of $\Sigma(n)$ has an inverse, so the rewriting system is inverse-closed.

 Suppose $u,v,w\in \Sigma^{\ast}$ with $u\overset{\ast}{\leftarrow} w\overset{\ast}{\rightarrow} v$ and $u\neq v$. By the construction, $u$ and $v$ are labels of paths in $\Gamma_n$ with the same initial and terminal vertices of $w$. By \cref{lem:pathtoslex}, there are \GamSlex{$\Gamma_n$}  geodesics with labels $u_1$ and $v_1$ and the same initial and terminal vertices of $w$ so that $u\overset{\ast}{\to}u_1$ and $v\overset{\ast}{\to} v_1$.  Since $\leq$ is a total order, we have $u_1=v_1$. Hence, $\nabla_n(G,\Sigma)$ is confluent.
 \end{proof}

Now that we know that $\nabla_n(G,\Sigma)$ is an inverse-closed rewriting system, we can conclude that the  monoid presented by $\nabla_n(G,\Sigma)$  is a group with finite inverse-closed generating set $\Sigma(n)$. Let \begin{equation}\label{eq:groupconstructed}
    \mathcal{G}_n(G,\Sigma)=M(\nabla_n(G,\Sigma))
\end{equation}
\begin{equation}\label{eq:groupgenconstructed}
    \mathcal{G}^{\text{gen}}_n(G,\Sigma)=(\mathcal{G}_n(G,\Sigma),\Sigma(n))
\end{equation}
In \cref{sec:identifythegroup}, we will identify the isomorphism class of $\mathcal{G}_n(G,\Sigma)$, while in \cref{sec:propertiesoftheconstruction}, we will prove some properties of Cayley graphs associated to $\mathcal{G}^{\text{gen}}_n(G,\Sigma)$. A notational advantage of  $\mathcal{G}_n^{\text{gen}}$ is that the input and output are both a group and its generating set, so can be applied repeatedly.

\begin{proposition}\label{prop:nablaisICCLRRS}
    Let $G$ be a group with finite inverse-closed generating set $\Sigma$. Then for each $n\in \mathbb{N}$, $\text{Cay}(G,\Sigma)$ is geodetic if and only if $\nabla_n(G,\Sigma)$ is length-reducing.
\end{proposition}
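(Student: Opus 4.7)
The plan is to chain together two equivalences already established earlier in the paper and add one very short bookkeeping observation. Specifically, I want to argue that $\text{Cay}(G,\Sigma)$ is geodetic if and only if $\Gamma_n$ is geodetic (by \cref{lem:uniformsubdivision}), if and only if $T(n)$ contains no rule of type $(\mathcal{R}_3)$ (by \cref{rem:ConstructionRemark}), if and only if every rule of $\nabla_n(G,\Sigma)$ is length-reducing. Only the final equivalence still needs to be verified, and it is a direct reading of the word lengths $|\ell|$ and $|r|$ from \cref{defn:TheConstruction}.

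Concretely, I would inspect each of the three rule types in turn. A rule of type $(\mathcal{R}_1)$ has $\ell$ labelling a backtrack $(u,v,u)$, so $|\ell|=2$, while $r=\lambda$, so $|r|=0$; hence $|\ell|>|r|$. A rule of type $(\mathcal{R}_2)$ satisfies $|\ell|=|\alpha|=|\beta|+1=|r|+1$ directly from the definition, so again $|\ell|>|r|$. A rule of type $(\mathcal{R}_3)$, however, has $|\ell|=|\alpha|=|\beta|=|r|$, so it is \emph{never} strictly length-reducing. It follows that $\nabla_n(G,\Sigma)$ is length-reducing exactly when $T(n)$ contains no rule of type $(\mathcal{R}_3)$.

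Putting the three equivalences together completes the proof: the absence of $(\mathcal{R}_3)$ rules is equivalent to geodeticity of $\Gamma_n$ by \cref{rem:ConstructionRemark}, and this is equivalent to geodeticity of $\Gamma=\text{Cay}(G,\Sigma)$ by \cref{lem:uniformsubdivision}.

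I do not expect a genuine obstacle here; all of the difficult content has already been absorbed into \cref{lem:uniformsubdivision} (which rules out pairs of equal-length geodesics after subdivision when the base graph is geodetic) and into the careful case split of the three rule types in \cref{defn:TheConstruction}. The only care required is to parse the definitions of $(\mathcal{R}_1)$, $(\mathcal{R}_2)$, $(\mathcal{R}_3)$ accurately so that the length comparisons are read off correctly, and to recall from the definition of length-reducing that the inequality $|\ell|>|r|$ is strict, which is precisely why type $(\mathcal{R}_3)$ rules obstruct the property.
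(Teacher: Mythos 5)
Your proof is correct and follows essentially the same route as the paper's: both chain \cref{lem:uniformsubdivision} with \cref{rem:ConstructionRemark} to reduce the statement to the presence or absence of type $(\mathcal{R}_3)$ rules. Your explicit check that $(\mathcal{R}_1)$ and $(\mathcal{R}_2)$ rules satisfy $|\ell|>|r|$ while $(\mathcal{R}_3)$ rules have $|\ell|=|r|$ is a small detail the paper leaves implicit, but it is exactly the right verification.
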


\begin{proof}
As noted in \cref{rem:ConstructionRemark}, $\Gamma_n$ is geodetic if and only if there are no rules of type $(\mathcal{R}_3)$.

Suppose $\Gamma=\text{Cay}(G,\Sigma)$ is geodetic. By \cref{lem:uniformsubdivision}, $\Gamma_n$ is geodetic. Since $\Gamma_n$ is geodetic, there are no rules of type $(\mathcal{R}_3)$ and we conclude $\nabla_n(G,\Sigma)$ is length-reducing.

Suppose $\Gamma=\text{Cay}(G,\Sigma)$ is not geodetic. By \cref{lem:uniformsubdivision}, $\Gamma_n$ is not geodetic, so $\mathcal{R}_3$ is non-empty. Hence,  $\nabla_n(G,\Sigma)$ is not length-reducing.
 \end{proof}

We have now proved the following. 
\setcounter{theoremx}{1}
\begin{theoremx}\label{thm:constructiongeodeticiffgeodetic}
  Let $G$ be a group with finite inverse-closed generating set $\Sigma$. For each $n\in \mathbb{N}$, $\text{Cay}(G,\Sigma)$ is geodetic if and only if $\text{Cay}(\mathcal{G}_n^{\text{gen}}(G,\Sigma))$ is geodetic.
\end{theoremx}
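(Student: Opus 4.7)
The plan is to deduce the theorem by chaining together the ingredients already assembled in this section, with the forward direction being essentially a one-line chain of implications and the reverse direction requiring a somewhat more delicate argument via the contrapositive.

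For the $(\Rightarrow)$ direction I would apply Proposition~\ref{prop:nablaisICCLRRS} to conclude that $\nabla_n(G,\Sigma)$ is length-reducing, note via Lemma~\ref{lem:nablaisICRS} that it is also inverse-closed and confluent, and then invoke Lemma~\ref{lem:icfclrrpresentsgroupwithgeodeticgeneratingset} to conclude that $\Sigma(n)$ is a geodetic generating set for $\mathcal{G}_n(G,\Sigma)$, so that $\text{Cay}(\mathcal{G}_n^{\text{gen}}(G,\Sigma))$ is geodetic.

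For the $(\Leftarrow)$ direction I would argue the contrapositive. If $\text{Cay}(G,\Sigma)$ is not geodetic, then by Proposition~\ref{prop:nablaisICCLRRS} the system $\nabla_n$ fails to be length-reducing, so it contains some rule $(\ell,r)$ of type $(\mathcal{R}_3)$ arising from distinct paths $\alpha,\beta$ in $\Gamma_n$ of common length $k$ sharing endpoints $p,q$. Since $\beta$ is \GamSlex{$\Gamma_n$} and $|\alpha|=|\beta|$, both $\alpha$ and $\beta$ are $\Gamma_n$-geodesics. The labels $\ell,r$ represent the same element $g\in\mathcal{G}_n(G,\Sigma)$, and reading them from the identity in $\text{Cay}(\mathcal{G}_n^{\text{gen}}(G,\Sigma))$ produces two length-$k$ paths from $e$ to $g$. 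These paths are distinct: since every rule of $\nabla_n$ has left-hand side of length at least $2$, each letter of $\Sigma(n)$ is irreducible, so by confluence distinct letters represent distinct group elements, forcing the two vertex sequences in the Cayley graph to diverge at the first position where $\ell$ and $r$ differ as words.

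The hard part will be upgrading these two length-$k$ paths to bona fide geodesics in the Cayley graph, since a priori some shorter word over $\Sigma(n)$ might represent $g$. To rule this out I would apply Lemma~\ref{lem:pathtoslex}: the word $\ell$ rewrites to a \GamSlex{$\Gamma_n$} geodesic $w_0$ from $p$ to $q$, and because $\alpha$ is already a $\Gamma_n$-geodesic, $|w_0|=k$. For any word $w$ representing $g$ we have $w\overset{\ast}{\leftrightarrow}\ell$, so by the Church--Rosser property (equivalent to confluence, as noted in the proof of Lemma~\ref{lem:icfclrrpresentsgroupwithgeodeticgeneratingset}) there is a common reduct. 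Since rules of types $(\mathcal{R}_1)$ and $(\mathcal{R}_2)$ strictly reduce length and rules of type $(\mathcal{R}_3)$ preserve it, rewriting never lengthens a word, so $|w|\ge|w_0|=k$. Hence the Cayley-graph distance from $e$ to $g$ equals $k$, both $\ell$ and $r$ are Cayley-graph geodesics, and their distinctness contradicts geodeticity of $\text{Cay}(\mathcal{G}_n^{\text{gen}}(G,\Sigma))$, completing the contrapositive.
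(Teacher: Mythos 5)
Your forward direction is exactly the paper's argument: the published proof consists precisely of chaining \cref{prop:nablaisICCLRRS}, \cref{lem:nablaisICRS} and \cref{lem:icfclrrpresentsgroupwithgeodeticgeneratingset}. For the reverse direction the paper cites the same three results without elaboration, and you are right that this needs unpacking: \cref{lem:icfclrrpresentsgroupwithgeodeticgeneratingset} takes length-reducing as a \emph{hypothesis}, so when $\text{Cay}(G,\Sigma)$ is not geodetic (and $\nabla_n(G,\Sigma)$ is consequently not length-reducing) that lemma says nothing, and one must argue directly that an $(\mathcal{R}_3)$ rule produces two distinct geodesics in $\text{Cay}(\mathcal{G}_n^{\text{gen}}(G,\Sigma))$. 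Your contrapositive does this, and your distinctness argument (letters are irreducible since every left-hand side has length at least $2$, so by confluence distinct letters represent distinct elements, and cancellation in the group forces the vertex sequences to diverge) is sound.

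There is one incomplete step in the final paragraph. From the Church--Rosser property you get a common reduct $t$ with $w \overset{\ast}{\to} t \overset{\ast}{\leftarrow} \ell$, and the fact that rewriting never lengthens a word gives only $|w| \ge |t|$ and $|\ell| \ge |t|$; this does not by itself yield $|w| \ge |w_0| = k$, since a priori $t$ could be shorter than $k$ (the common reduct need not be $w_0$). The missing observation is that every $\overset{\ast}{\to}$-descendant of $\ell$ labels a path in $\Gamma_n$ from $p$ to $q$ (each rule replaces the label of a subpath by the label of another path with the same endpoints, exactly as used in the proof of \cref{lem:nablaisICRS}), and hence has length at least $d_{\Gamma_n}(p,q) = k$ because $\alpha$ is a $\Gamma_n$-geodesic. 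Thus $|w| \ge |t| \ge k$, and the rest of your argument goes through. With that one sentence added, the proof is complete and is a legitimate filling-in of the detail the paper elides.
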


\begin{proof}
    Follows from \cref{lem:icfclrrpresentsgroupwithgeodeticgeneratingset}, \cref{lem:nablaisICRS} and \cref{prop:nablaisICCLRRS}.
\end{proof}

\section{Identifying the group constructed}\label{sec:identifythegroup}

We will use Tietze transformations to transform  $\langle \Sigma \mid \{\ell^{-1}r \mid (\ell,r)\in T\}\rangle$ into a presentation from which the the isomorphism class of $\mathcal{G}_n(G,\Sigma)$ is easily understood (that is, as in \cref{lem:FreeProducts}).

\begin{proposition}\label{prop:whatgroupisnablapresenting}
        Let $n\in \mathbb{N}$ and let $G$ be a group with finite inverse-closed generating set $\Sigma$. Then $\mathcal{G}_n(G,\Sigma)\cong G\ast F_{n|\Sigma|}$. 
\end{proposition}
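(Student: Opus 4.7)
My approach is via a sequence of Tietze transformations applied to the presentation $\langle\Sigma(n)\mid R(n)\rangle$ of $\mathcal{G}_n(G,\Sigma)$, where $R(n)=\{\ell r^{-1}\mid (\ell,r)\in T(n)\}$; this is a valid presentation by \cref{lem:rewritingsystemsandpresentations}. The goal is to massage it into a form to which \cref{lem:FreeProducts} applies and immediately exhibits the free-product decomposition $G\ast F_{n|\Sigma|}$.

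First I would apply (T2) to introduce, for each $x_i \in \Sigma_1 \cup \Sigma_2$, a new generator $\tilde{x}_i$ defined by $\tilde{x}_i = x_{i,1}\cdots x_{i,2n+1}$; intuitively, $\tilde{x}_i$ represents the image of the original generator $x_i \in \Sigma$ in the subdivided setting. The $(\mathcal{R}_1)$-rules then supply cancellation relations that support elimination via (T2$^{-1}$): I eliminate each $c_{i,j}$ using $c_{i,j}b_{i,j}=1$ and each $a_{i,k}$ for $k\in[n+2,2n+1]$ using $a_{i,k}a_{i,2n+2-k}=1$. After these substitutions, the defining relation for $\tilde{a}_i$ rearranges to $\tilde{a}_i = (a_{i,1}\cdots a_{i,n})\,a_{i,n+1}\,(a_{i,1}\cdots a_{i,n})^{-1}$, solvable for $a_{i,n+1}$ and hence eliminable by (T2$^{-1}$); similarly, the relation for $\tilde{b}_i$ expresses $b_{i,2n+1}$ in terms of the surviving generators. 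The middle-$a$ relation $(a_{i,n+1}a_{i,n+1},\lambda)$ from $(\mathcal{R}_1)$ becomes, under these substitutions, exactly $\tilde{a}_i^2=1$, the expected order-two relation for the self-inverse $a_i\in G$. The surviving generators are $\{a_{i,j}: j\in[1,n]\}\cup\{b_{i,j}: j\in[1,2n]\}\cup\{\tilde{x}_i\}$, and the first two sets together have cardinality $nm_1+2nm_2 = n|\Sigma|$.

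The crux, and the main obstacle, is to prove that after these substitutions the normal closure of the surviving $(\mathcal{R}_2)$ and $(\mathcal{R}_3)$ relations (together with $\tilde{a}_i^2$) is precisely the normal closure in $F(\{\tilde{x}_i\})$ of a set of defining relators for $G$. In one direction, each $(\mathcal{R}_2)$ or $(\mathcal{R}_3)$ relator is a word $\alpha\beta^{-1}$ labelling an embedded circuit $C$ in $\Gamma_n$; because every embedded circuit in $\Gamma_n$ has length divisible by $2n+1$, the $\oldV$-vertices visited by $C$ form (in cyclic order) an embedded circuit $C'$ in $\Gamma$, and tracking the labels (accounting for an offset when the base-vertex of $C$ lies in $\newV$) shows that the substituted word is a conjugate of the product $\tilde{s}_{i_0}\cdots\tilde{s}_{i_{\ell-1}}$ read off from $C'$, a word spelling the identity in $G$. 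In the other direction, I would appeal to the standard fact that $\ker(F(\Sigma)\twoheadrightarrow G) = \pi_1(\Gamma,1_G)$ is normally generated by the labels of embedded circuits in $\Gamma$; each such circuit lifts to an embedded circuit in $\Gamma_n$ which contributes an $(\mathcal{R}_2)$ or $(\mathcal{R}_3)$ rule depending on the parity of its length.

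With both inclusions in hand, I would apply (T1) to add a standard set $R_G$ of defining relators for $G$ in the $\tilde{x}_i$'s, and then (T1$^{-1}$) to remove the now-redundant $(\mathcal{R}_2)$ and $(\mathcal{R}_3)$ relations. The resulting presentation
$$\langle \{a_{i,j}: j\in[1,n]\}\cup\{b_{i,j}: j\in[1,2n]\}\cup\{\tilde{x}_i\} \;\bigm|\; R_G\rangle$$
partitions the generators into those subject to $R_G$ (the $\tilde{x}_i$'s, presenting $G$) and those subject to no relations at all (an alphabet of cardinality $n|\Sigma|$). Applying \cref{lem:FreeProducts} yields $\mathcal{G}_n(G,\Sigma) \cong G \ast F_{n|\Sigma|}$.
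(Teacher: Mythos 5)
Your proposal is correct and takes essentially the same route as the paper: starting from $\langle\Sigma(n)\mid\{\ell r^{-1}\mid(\ell,r)\in T(n)\}\rangle$ via \cref{lem:rewritingsystemsandpresentations}, a sequence of Tietze transformations introduces one new generator per letter of $\Sigma_1\cup\Sigma_2$, eliminates the inverse letters and a middle letter from each subdivided edge, and lands on a presentation split as in \cref{lem:FreeProducts}, with the $n|\Sigma|$ leftover letters spanning the free factor. One small caution on your backward inclusion: an embedded circuit of $\Gamma_n$ need not itself decompose as $\alpha\beta^{-1}$ with $\beta$ \GamSlex{$\Gamma_n$} (that is a global minimality condition, not minimality within the circuit), but its label still lies in $\ggen{R_0}$ because it rewrites to $\lambda$ by \cref{lem:pathtoslex}, so your conclusion stands.
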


\begin{proof}
By \cref{lem:rewritingsystemsandpresentations}, the group $\nabla_n(G,\Sigma)$ associated to the rewriting system $(\Sigma(n),T(n))$ is the group presented by $\langle \Sigma(n) \mid R_0\rangle$ where $R_0=\{\ell r^{-1} \mid (\ell,r)\in T(n)\}$.   We adopt the notation of \cref{sec:TheConstruction}.  Note that rewriting rules of type $(\mathcal{R}_1)$ express inverse relations between generators, while all other rewriting rules label embedded circuits in $\Gamma(n)$.

As a first step, we perform a sequence of Tietze transformations that remove generators which are inverses of other generators. Let $X_0=\Sigma(n)$ and $X_1 = \{a_{i, j} \mid i\in [1, m_1],  j \in[1,n+1]\} \cup \Sigma_2(n)$;  we note that $X_1$ is a minimal set with the property that the inverse closure of $X_1$ is $X_0$. 
Let $R_0'=\{a_{i,2n+1-j}a_{i,j}\mid i\in[1,m_1],j\in[1,n]\}\cup \{c_{i,j}b_{i,j}\mid i\in[1,m_2],j\in [1,2n+1]\}$. Note that each $yx\in R_0'$ corresponds to some path $(u,v,u)$ of $\Gamma_n$.
Let $R_1\subseteq X_0^\ast$ be the set of words obtained from $R_0$ by applying a letter homomorphism which replaces $a_{i,2n+1-j}$ by $a_{i,j}^{-1}$ for $i\in[1,m_1],j\in[1,n]$, and $c_{i,j}$ by $b_{i,j}^{-1}$ for $i\in[1,m_2],j\in [1,2n+1]$. It is clear that $R_0'\subseteq R_0$ and $R_1\subseteq \ggen{R_0}$.
Then the following is a sequence of Tietze transformations, with the types indicated, that transforms our starting presentation into one in with a smaller set of generators:
\[\langle X_0 \mid R_0 \rangle \overset{(T_1)}{\mapsto} \langle X_0 \mid R_0 \cup R_1 \rangle \overset{(T_1^{-1})}{\mapsto} \langle X_0 \mid R_0' \cup R_1 \rangle \overset{(T_2^{-1})}{\mapsto} \langle X_1 \mid R_1 \rangle.\]

Let $X_2 = X_1$, and let $R_2$ be the subset of $R_1$ comprising those relators that declare certain generators to have order two and those that label embedded circuits in $\Gamma(n)$ and which begin with one of the letters $a_{i, 1}, a_{i, 1}^{-1}, b_{i,1}$ or $b_{i, 1}^{-1}$; the relators omitted are those that correspond to paths that begin at vertices that were added in the subdivision of the original graph.   Since each word in $R_1 \setminus R_2$ is a cyclic conjugate of a word in $R_2$, the following is a Tietze transformation: 
\[\langle X_1 \mid R_1 \rangle \overset{(T_1^{-1})}{\mapsto} \langle X_2 \mid R_2 \rangle.\]

We introduce more new generators which correspond to labels on edges of $\Gamma$. Let $X_3 = X_2 \cup \Sigma$, and let 
\begin{multline*}R_3 = R_2 \cup \{a_{i, n+1}^{-1} a_{i, n}^{-1} \dots a_{i, 1}^{-1} a_{i} a_{i, 1} \dots a_{i, n} : i\in[1,m_1]\}\\ \cup \{b_{i, n+1}^{-1} b_{i, n}^{-1} \dots b_{i, 1}^{-1} b_i  b_{i, 2n+1}^{-1} \dots b_{i, n+2}^{-1} :  i\in[1,m_2]\}.\end{multline*}  Then the following is a Tietze transformation: 
\[\langle X_2 \mid R_2 \rangle \overset{(T_2)}{\mapsto} \langle X_3 \mid R_3 \rangle.\]

Let $X_4 = X_3$.  Let $R_3'$ be the set of relators in $R_2$ that express that certain generators have order two, let $R_3''$ be the set of relators in $R_3$ that label embedded circuits in $\Gamma(n)$ and begin with one of the letters $a_{i, 1}, a_{i, 1}^{-1}, b_{i_1}$ or $b_{i, 1}^{-1}$, and let $R_3'''$ be the set of relators in $R_3$ that were introduced in the previous paragraph.   Let $R_4''$ be the set of words obtained from $R_3''$ by replacing every occurrence of the factor 
$a_{i, 1} \cdots a_{i, n} a_{i, n+1} a_{i, n}^{-1} \cdots a_{i, 1}^{-1}$ by $a_i$, every occurrence of the factor $b_{i, 1} b_{i, 2} \cdots b_{i, 2n+1}$ by $b_i$, and every occurrence of the factor $b_{i, 2n+1}^{-1} \cdots b_{i, 2}^{-1} \dots b_{i, 1}^{-1}$ by $b_i^{-1}$.  Let $R_4 = R_3' \cup R_4'' \cup R_4'''$.  The following is a sequence of Tietze transformations:
\[\langle X_3 \mid R_3 \rangle \overset{(T_1)}{\mapsto} \langle X_4 \mid R_3 \cup R_4'' \rangle \overset{(T_1^{-1})}{\mapsto} \langle X_4 \mid R_4\rangle.\]

Finally, we remove the middle edge from each subdivided edge.   Let $X_5 = X_4 \setminus (\{a_{i, n+1} : i\in[1,m_1]\} \cup \{b_{i, n+1}: i\in[1, m_2]\})$ and let $R_5 = R_4 \setminus R_3'''$. We note that in $R_4$, the letter $a_{i, n+1}$ appears only in the relator $a_i^{-1} a_{i, 1} \dots a_{i, n} a_{i, n+1} a_{i, n}^{-1} \dots a_{i, 1}^{-1}$, the letter $b_{i, n+1}$ appears only in the relator $b_i^{-1} b_{i, 1} b_{i, 2} \dots b_{i, 2n+1}$, the letter $b_{i, n+1}^{-1}$ does not appear at all.   It follows that the following is a Tietze transformation:
\[\langle X_4 \mid R_4 \rangle \overset{(T_2^{-1})}{\mapsto} \langle X_5 \mid R_5 \rangle.\]

In summary, \[X_5 = \Sigma_1 \cup \Sigma_2 \cup \{a_{i, j} :  i \in[1, m_1],  j \in[1,] n] \} 
\cup \{b_{i, j} :  i \in[1, m_2],  j \in[1, 2n+1] \text{ and } j \neq n+1\},\]
and $R_5$ contains only words from $(\Sigma_1 \cup \Sigma_2 \cup \Sigma_2^{-1})^\ast$.   By \cref{lem:FreeProducts}, $\langle X_5 \mid R_5\rangle$ presents a group isomorphic to $\langle \Sigma_1 \cup \Sigma_2 \mid R_5\rangle \ast F( \{a_{i, j} :  i \in [1,m_1],  j \in[1, n] \} \cup \{b_{i, j} :  i \in[1, m_2], j \in [1,2n+1] \text{ and } j \neq n+1\})$.  Finally, we note that $\langle X_5 \mid R_5\rangle$ presents $G$.
\end{proof}

 We have now proved the following.

\setcounter{theoremx}{0}
\begin{theoremx}\label{thm:construction}
   Let $G$ be a group  with finite inverse-closed generating set $\Sigma$ and $n\in \mathbb{N}$. Then  the rewriting system $\nabla_n(G,\Sigma)=(\Sigma(n),T(n))$ 
    is  inverse-closed confluent and presents a finitely-generated group that is isomorphic to 
    $G\ast F_{n|\Sigma|}$. 
\end{theoremx}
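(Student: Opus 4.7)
The plan is to observe that Theorem~A simply bundles together two results already established earlier in the excerpt, together with one trivial finiteness observation; the proof is essentially a two-line synthesis, so the real content lies in citing the right preceding results and checking that they cover all three claims in the statement.

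First, that the rewriting system $\nabla_n(G,\Sigma)=(\Sigma(n),T(n))$ is inverse-closed and confluent is exactly the content of \cref{lem:nablaisICRS}. I would simply invoke it. Internally, the inverse-closedness there is by direct inspection of the explicit index pairs given in the construction (within $\Sigma_1(n)$ the letter $a_{i,j}$ is paired with $a_{i,2n+2-j}$, and each $b_{i,j}\in\Sigma_2(n)$ is paired with $c_{i,2n+2-j}\in\Sigma_3(n)$), and confluence is obtained by driving any two rewrites $u\overset{\ast}{\leftarrow}w\overset{\ast}{\to}v$ of a common word to a common \GamSlex{$\Gamma_n$} normal form via \cref{lem:geodesictominimal} and \cref{lem:pathtoslex}, together with the totality of the slex order. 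Nothing more needs to be redone.

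Second, the identification of the presented group with $G\ast F_{n|\Sigma|}$ is precisely \cref{prop:whatgroupisnablapresenting}, so I would cite it verbatim. Finite generation then needs only the observation that $|\Sigma(n)|=(2n+1)|\Sigma|$ is finite whenever $\Sigma$ is, so $\mathcal{G}_n(G,\Sigma)$ is finitely generated by the finite set $\Sigma(n)$; this is immediate from the definition of $\Sigma(n)=\Sigma_1(n)\cup\Sigma_2(n)\cup\Sigma_3(n)$.

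The main obstacle has therefore been absorbed into \cref{prop:whatgroupisnablapresenting}: the careful Tietze bookkeeping that eliminates the inverse half of $\Sigma(n)$, discards cyclically-conjugate relators, reintroduces the original generating set $\Sigma$ of $G$, eliminates the midpoint generators $a_{i,n+1}$ and $b_{i,n+1}$, and finally applies \cref{lem:FreeProducts} to read off the free-product decomposition. Once that proposition is in hand, Theorem~A follows immediately by combining it with \cref{lem:nablaisICRS}; no further argument is required.
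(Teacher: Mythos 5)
Your proposal is correct and matches the paper's own proof, which likewise derives Theorem~A directly by combining \cref{lem:nablaisICRS} with \cref{prop:whatgroupisnablapresenting}. The extra remark that $|\Sigma(n)|=(2n+1)|\Sigma|$ is finite is a harmless (and accurate) addition the paper leaves implicit.
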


\begin{proof}
    Follows from \cref{lem:nablaisICRS,prop:whatgroupisnablapresenting}.
\end{proof}

\cref{prop:whatgroupisnablapresenting}
 shows that we have not produced any new geodetic groups. On input of a plain group with geodetic generating set into $\mathcal{G}_n^{\text{gen}}$, we will output a plain group with geodetic generating set. However, we ask whether we have constructed any geodetic Cayley graphs different from those produced by the standard geodetic generating sets for plain groups (as seen in \cref{eg:standardgeodeticgensetforplaingroup}).

\begin{example}\label{rem:differentpresentations}
To indicate the variety of Cayley graphs for a particular isomorphism class of plain group, we list a number of geodetic generating sets for the free product of a cyclic group of order $5$ and a free group over a set $X$ of size $8$. Let $G=\mathcal{C}_5\ast F(X)$ with $\mathcal{C}_5=\langle a \mid a^5=1\rangle$ and $X=\{x_1,\dots,x_8\}$. We have the following geodetic generating sets of groups isomorphic to $G$ that result in non-isomorphic geodetic Cayley graphs:

\begin{multicols}{2}
\begin{itemize}
    \item[(i)] $\{a^{\pm1},x_1^{\pm 1},\dots,x_8^{\pm 1}\}$
    \item[(ii)] $\{a^{\pm 1}\}(1)\cup\{x_1^{\pm 1},\dots,x_6^{\pm 1}\}$
    \item[(iii)] $\{a^{\pm 1}\}(2)\cup\{x_1^{\pm 1},\dots,x_4^{\pm1}\}$
    \item[(iv)]  $\{a^{\pm 1}\}(3)\cup\{x_1^{\pm 1},x_2^{\pm1}\}$
    \item[(v)] $\{a^{\pm 1}\}(4)$
    \item[(vi)] $\{a^{\pm 1},a^{\pm 2},x_1^{\pm 1},\dots,x_8^{\pm 1}\}$
    \item[(vii)] $\{a^{\pm 1},a^{\pm 2}\}(1)\cup\{x_1^{\pm1},\dots,x_4^{\pm 1}\}$
      \item[(viii)] $\{a^{\pm 1},a^{\pm 2}\}(2)$
\end{itemize}\end{multicols}

Generating sets (i) and (vi)  correspond to standard  geodetic generating sets (see  
\cref{eg:standardgeodeticgensetforplaingroup}).
 The second generating set (ii) is obtained by taking the 1-subdivision of the odd-cycle Cayley graph for $C_5$ (which by Theorem gives a group isomomorphic to $C_5\ast F_2$ since $n=1$ and $|\Sigma|=1$) and  a rank 6 free factor (generated by $x_1\,dots, x_6$ and their inverses). The next three items are obtained similarly from the odd cycle.
 Items (vii) and (viii) are obtained by taking the 1-subdivision of the complete Cayley graph for $C_5$, adding the appropriate number of generators to make up the remaining free factor.

This is by no means an exhaustive list. Additional non-isomorphic Cayley graphs can readily be obtained using the method shown in \cref{eg:shortcutinfreegroups} to obtain alternative geodetic generating sets for free factors.
It is straightforward to prove these generating sets give distinct isomorphism classes of Cayley graphs. For example, the maximal cycle subgraphs contained in the Cayley graphs corresponding to (i)-(v) are $5,15,25,35$ and $45$ respectively.

In the next section we will investigate properties relating to isomorphisms of Cayley graphs produced by the construction. For example, \cref{prop:nablarespectsfreeproducts} explains why generating set (ii) and $\{a^{\pm1},x_1^{\pm1},x_2^{\pm1}\}(1)$ have isomorphic Cayley graphs and \cref{prop:iteratedsubdivision} explains why generating set (iii) and $[\{a^{\pm 1}\}(1)](1)$ have isomorphic Cayley graphs.
\end{example}

\section{Further properties of the construction}
\label{sec:propertiesoftheconstruction}

In this section we observe some simple facts about the construction, namely: that iterating the construction (applying subdivision again after an initial subdivision) has the same effect as subdividing with a larger number of subintervals; and applying the construction to a free product with a standard geodetic generating set is equivalent to applying the construction to each factor separately then forming the free product.

We start with a straightforward observation.

\begin{lemma}\label{lem:nis0doesnothing}
Let $G$ be a group with finite inverse-closed generating set $\Sigma$. Then $\text{Cay}(G,\Sigma)\cong \text{Cay}(\mathcal{G}^{\text{gen}}_0(G,\Sigma))$.
\end{lemma}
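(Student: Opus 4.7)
The crux is that for $n=0$ the construction is essentially a relabelling, so the plan is to identify $\Gamma_0$ with $\Gamma$ and $\Sigma(0)$ with $\Sigma$, and then appeal to \cref{thm:construction}.

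First, I would observe that $\Gamma_0 = \Gamma$ as graphs, since subdividing each edge into $2(0)+1 = 1$ sub-edge is the identity operation on the graph. The recipe in \cref{defn:TheConstruction} gives $\Sigma_1(0) = \{a_{i,1}\}_{i=1}^{m_1}$, $\Sigma_2(0) = \{b_{i,1}\}_{i=1}^{m_2}$ and $\Sigma_3(0) = \{c_{i,1}\}_{i=1}^{m_2}$, one letter per element of $\Sigma$. Thus the bijection $\phi\colon \Sigma \to \Sigma(0)$ sending $a_i \mapsto a_{i,1}$, $b_i \mapsto b_{i,1}$, $c_i \mapsto c_{i,1}$ makes sense, and the construction of $L(0)$ gives $L(0)(g,h) = \phi(L(g,h))$ for every edge $\{g,h\}$ of $\Gamma = \Gamma_0$. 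In particular $(\Gamma, L)$ and $(\Gamma_0, L(0))$ are isomorphic labelled graphs under the identification induced by $\phi$.

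Second, I would invoke \cref{thm:construction} to get $\mathcal{G}_0(G,\Sigma) \cong G \ast F_{0 \cdot |\Sigma|} = G \ast F_0 \cong G$ as abstract groups. To upgrade this to a labelled Cayley graph isomorphism, I need to check that the isomorphism can be chosen to realise $\phi^{-1}$ on generating sets. For this I would trace the Tietze transformations in the proof of \cref{prop:whatgroupisnablapresenting} specialised to $n=0$: the relators introduced at the transition to $X_3$, namely $a_{i,n+1}^{-1} a_{i,n}^{-1} \cdots a_{i,1}^{-1} a_i a_{i,1} \cdots a_{i,n}$ and $b_{i,n+1}^{-1} b_{i,n}^{-1} \cdots b_{i,1}^{-1} b_i b_{i,2n+1}^{-1} \cdots b_{i,n+2}^{-1}$, collapse to $a_{i,1}^{-1} a_i$ and $b_{i,1}^{-1} b_i$ respectively, which force $a_i = a_{i,1} = \phi(a_i)$ and $b_i = b_{i,1} = \phi(b_i)$. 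The final Tietze step then removes precisely the generators $a_{i,1}, b_{i,1}$, leaving a presentation on $\Sigma$. Hence the isomorphism $\mathcal{G}_0(G,\Sigma) \to G$ can be chosen so that it sends $\Sigma(0)$ bijectively onto $\Sigma$ via $\phi^{-1}$.

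Third, I would conclude: any group isomorphism that sends one finite inverse-closed generating set bijectively onto another induces an isomorphism of the corresponding labelled Cayley graphs, which gives $\text{Cay}(G,\Sigma) \cong \text{Cay}(\mathcal{G}^{\text{gen}}_0(G,\Sigma))$.

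The only real obstacle is notational rather than mathematical: verifying carefully that the abstract group isomorphism coming from \cref{thm:construction} really does realise $\phi^{-1}$ on generating sets in the degenerate case $n=0$, rather than some more complicated identification. This amounts to inspecting the chain of Tietze transformations with $n=0$ substituted in, and the computation above shows the identification is literally $\phi^{-1}$.
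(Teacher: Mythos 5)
Your proposal is correct and rests on the same idea as the paper's one-line proof, namely that the bijection $\phi\colon \Sigma\to\Sigma(0)$, $x_i\mapsto x_{i,1}$, identifies $(\Gamma,L)$ with $(\Gamma_0,L(0))$ and hence induces a labelled graph isomorphism of the two Cayley graphs. Your extra step of tracing the Tietze transformations with $n=0$ to confirm that the isomorphism $\mathcal{G}_0(G,\Sigma)\cong G$ realises $\phi^{-1}$ on generating sets is a more careful justification of what the paper simply asserts, but it is the same approach.
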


\begin{proof}
The bijection $\phi: \Sigma\to \Sigma(0)$ given by $\phi (x_i)\mapsto x_{i,1}$ induces a labelled graph isomorphism between the two Cayley graphs.
\end{proof}

\begin{proposition}\label{prop:iteratedsubdivision}
    Let $n,m\in \mathbb{N}$ and $G$ a group with finite inverse-closed generating set $\Sigma$. Then \[\text{Cay}(\mathcal{G}^{\text{gen}}_n\circ \mathcal{G}^{\text{gen}}_m (G,\Sigma))\cong \text{Cay}(\mathcal{G}^{\text{gen}}_k(G,\Sigma))\cong \text{Cay}(\mathcal{G}^{\text{gen}}_m\circ \mathcal{G}^{\text{gen}}_n(G,\Sigma)),\] where $k=2nm+n+m$.
\end{proposition}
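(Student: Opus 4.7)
The plan is to prove both isomorphisms in the chain. Since $k = 2nm + n + m$ is symmetric in $n$ and $m$, it suffices to establish $\text{Cay}(\mathcal{G}^{\text{gen}}_n \circ \mathcal{G}^{\text{gen}}_m(G,\Sigma)) \cong \text{Cay}(\mathcal{G}^{\text{gen}}_k(G,\Sigma))$; the second isomorphism then follows by interchanging the roles of $m$ and $n$. I would first verify that both sides yield the same abstract group via two applications of \cref{prop:whatgroupisnablapresenting}:
\[\mathcal{G}_n \circ \mathcal{G}_m(G,\Sigma) \cong G \ast F_{m|\Sigma|} \ast F_{n(2m+1)|\Sigma|} \cong G \ast F_{(m+n(2m+1))|\Sigma|} = G \ast F_{k|\Sigma|},\]
using $|\Sigma(m)| = (2m+1)|\Sigma|$ and $m + n(2m+1) = k$. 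This matches $\mathcal{G}_k(G,\Sigma) \cong G \ast F_{k|\Sigma|}$ obtained directly.

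The core step is to construct an explicit labelled Cayley graph isomorphism. The key geometric observation is that both constructions replace each edge of $\Gamma = \text{Cay}(G, \Sigma)$ by a path of length $(2m+1)(2n+1) = 2k+1$. Parametrising generators in $\Sigma(m)(n)$ by triples $(i, j, \ell)$ with $1 \le j \le 2m+1$ and $1 \le \ell \le 2n+1$, and those in $\Sigma(k)$ by pairs $(i, p)$ with $1 \le p \le 2k+1$, I would define a bijection matching the $p$-th subedge of the $\Gamma_k$-subdivision with the $(j, \ell)$-th subedge of the $(\Gamma_m)_n$-subdivision of the same original edge. Owing to the $\Sigma_2$-$\Sigma_3$ orientation convention at the intermediate stage, the explicit formula splits into three regimes: $p = (j-1)(2n+1) + \ell$ for $j \le m$; $p = m(2n+1) + \ell$ for $j = m+1$; and $p = (j-1)(2n+1) + (2n+2 - \ell)$ for $j \ge m+2$.

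The main obstacle, which I expect to be the most technical part, is checking that this bijection extends to a labelled Cayley graph isomorphism. First, one verifies that inverses are preserved: a short calculation shows that whenever $(i, j, \ell)$ and its inverse in $\Sigma(m)(n)$ (which is either $(i, 2m+2-j, \ell)$ for a non-involution pair or $(i, m+1, 2n+2-\ell)$ for a subdivided involution) map under the bijection to $(i, p)$ and $(i, p')$, then $p + p' = 2k+2$, matching the inverse pairing in $\Sigma(k)$; in particular the unique $a$-family involution $(i, m+1, n+1)$ maps to the involution $(i, k+1)$. With inverses matched, rules of type $\mathcal{R}_1$ correspond directly. For rules of types $\mathcal{R}_2$ and $\mathcal{R}_3$, one observes that $(\Gamma_m)_n$ and $\Gamma_k$ are canonically isomorphic as subdivisions of $\Gamma$, so embedded circuits and their slex-paths correspond; hence the rules generated by these circuits match under the bijection. \cref{lem:nablaisICRS} then guarantees that both confluent inverse-closed rewriting systems produce isomorphic Cayley graphs via the constructed bijection.
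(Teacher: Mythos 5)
Your proposal is correct and follows essentially the same route as the paper: both identify $(\Gamma_m)_n$ and $\Gamma_k$ as the common $(2k+1)$-fold subdivision of $\Gamma$, write down an explicit bijection between $\Sigma(k)$ and $(\Sigma(m))(n)$ indexed by matching sub-edges, check that it respects inverses and the rules of types $\mathcal{R}_1$--$\mathcal{R}_3$, and obtain the second isomorphism by the symmetry of $k=2nm+n+m$ in $n$ and $m$. The only differences are that you add a (harmless, unneeded) group-level sanity check via \cref{prop:whatgroupisnablapresenting}, and that your three-regime indexing makes explicit the orientation flip forced by the $\Sigma_2$/$\Sigma_3$ partition of $\Sigma(m)$, a bookkeeping point the paper's uniform table glosses over with ``one may verify.''
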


\begin{proof}
Consider an edge of $\Gamma=\text{Cay}(G,\Sigma)$ and the process of subdividing and labelling done to this edge to construct $\Gamma_m$, $(\Gamma_m)_n$ and $\Gamma_k$. \cref{fig:edgecomparison} shows an edge in $\Gamma$ being labelled by an element of $\Sigma_2$ and $\Sigma_3$ (notation for edges labelled by an element of $\Sigma_1$ is essentially the same).

\begin{figure}[ht]
 \begin{subfigure}{0.9\textwidth}
 \begin{center}
      \begin{tikzpicture}

      \phantom{\node[circle,draw,color=gray,fill=gray,inner sep=1.3pt] (B01) at (11,0) {};\node[below right] at (B01) {\textcolor{red}{\tiny{$c_{i,2m+1,2n+1}$}}};} 

      \node[circle, draw, fill=black,inner sep=1.3pt] (A) at (0,0) {};
        \node[circle, draw, fill=black,inner sep=1.3pt] (B) at (12,0) {};
        \node[above left] at (B) {\textcolor{blue}{$b_i$}};
         \node[below right] at (A) {\textcolor{red}{$c_{i}$}};

          \node[right] at (B) {$q$};
         \node[left] at (A) {{$p$}};
        \draw[thick,{Stealth[color=red]}-{Stealth[color=blue]}] (A) -- (B);
        
\end{tikzpicture}
\caption{An edge $\{p,q\}$ in $\Gamma$.
\label{fig:gamma}}
\end{center}
\end{subfigure}

\begin{subfigure}{0.9\textwidth}
      \begin{center}
      \begin{tikzpicture}

\phantom{\node[circle,draw,color=gray,fill=gray,inner sep=1.3pt] (B01) at (11,0) {};\node[below right] at (B01) {\textcolor{red}{\tiny{$c_{i,2m+1,2n+1}$}}};} 

\node[circle, draw, fill=black,inner sep=1.3pt] (A) at (0,0) {};
\node[circle, draw, fill=black,inner sep=1.3pt] (B) at (12,0) {};

  \node[right] at (B) {$q$};
\node[left] at (A) {{$p$}};

\node[circle,draw,color=ForestGreen,fill=ForestGreen,inner sep=1.3pt] (A1) at (4,0) {};
\node[circle, draw,color=ForestGreen,fill=ForestGreen,inner sep=1.3pt] (B1) at (8,0) {};

         \node[above left] at (B) {\textcolor{blue}{\tiny{$b_{i,2m+1}$}}};
         \node[below right] at (A) {\textcolor{red}{\tiny{$c_{{i},{1}}$}}};

\node[above left] at (A1) {\textcolor{blue}{\tiny{$b_{i,1}$}}};
 \node[below right] at (B1) {\textcolor{red}{\tiny{$c_{i,2m+1}$}}};
         
        \draw[thick,{Stealth[color=red]}-{Stealth[color=blue]}] (A) -- (A1);
          \draw[dashed,{Stealth[color=red]}-{Stealth[color=blue]}] (A1) -- (B1);
            \draw[thick,{Stealth[color=red]}-{Stealth[color=blue]}] (B1) -- (B);

\end{tikzpicture}

\caption{Edge $\{p,q\}$ subdivided in $\Gamma_m$. \label{fig:gammam}}
\end{center}
\end{subfigure}

   \begin{subfigure}{0.9\textwidth}
      \begin{center}
      \begin{tikzpicture}

\node[circle, draw, fill=black,inner sep=1.3pt] (A) at (0,0) {};
\node[circle, draw, fill=black,inner sep=1.3pt] (B) at (12,0) {};
\node[right] at (B) {$q$};
\node[left] at (A) {{$p$}};
\

 \node[circle, draw,color=ForestGreen,fill=ForestGreen,inner sep=1.3pt] (A1) at (4,0) {};
\node[circle, draw,color=ForestGreen,fill=ForestGreen,inner sep=1.3pt] (B1) at (8,0) {};

\node[circle,draw,color=gray,fill=gray,inner sep=1.3pt] (A01) at (1,0) {};
\node[circle,draw,color=gray,fill=gray,inner sep=1.3pt] (A11) at (3,0) {};

\node[circle,draw,color=gray,fill=gray,inner sep=1.3pt] (B11) at (9,0) {};
\node[circle,draw,color=gray,fill=gray,inner sep=1.3pt] (B01) at (11,0) {};

\node[above left] at (A1) {\textcolor{blue}{\tiny{$b_{i,1,2n+1}$}}};
\node[above left] at (A01) {\textcolor{blue}{\tiny{$b_{i,1,1}$}}};
\node[below right] at (A11) {\textcolor{red}{\tiny{$c_{i,1,2n+1}$}}};
\node[below right] at (A) {\textcolor{red}{\tiny{$c_{i,1,1}$}}};

\node[below right] at (B1) {\textcolor{red}{\tiny{$c_{i,2m+1,1}$}}};
\node[below right] at (B01) {\textcolor{red}{\tiny{$c_{i,2m+1,2n+1}$}}};
\node[above left] at (B11) {\textcolor{blue}{\tiny{$b_{i,2m+1,1}$}}};
\node[above left] at (B) {\textcolor{blue}{\tiny{$b_{i,2m+1,2n+1}$}}};

        \draw[thick,{Stealth[color=red]}-{Stealth[color=blue]}] (A) -- (A01);
\draw[dotted,{Stealth[color=red]}-{Stealth[color=blue]}] (A01) -- (A11);
 \draw[thick,{Stealth[color=red]}-{Stealth[color=blue]}] (A11) -- (A1);
 
  \draw[dashed,{Stealth[color=red]}-{Stealth[color=blue]}] (A1) -- (B1);
  
     \draw[thick,{Stealth[color=red]}-{Stealth[color=blue]}] (B1) -- (B11);
\draw[dotted,{Stealth[color=red]}-{Stealth[color=blue]}] (B11) -- (B01);
 \draw[thick,{Stealth[color=red]}-{Stealth[color=blue]}] (B01) -- (B);

\end{tikzpicture}

\caption{
Edge $\{p,q\}$ subdivided in$ (\Gamma_m)_n$. Note that the an edge labelled $b_{i,j}$ in $\Gamma_m$ is subdivided into $2n+1$ edges.
\label{fig:gammamn1}}
\end{center}
\end{subfigure}

   \begin{subfigure}{0.9\textwidth}
      \begin{center}
      \begin{tikzpicture}
\phantom{\node[below right] at (B01) {\textcolor{red}{\tiny{$c_{i,2m+1,2n+1}$}}};} 

\node[circle, draw, fill=black,inner sep=1.3pt] (A) at (0,0) {};
\node[circle, draw, fill=black,inner sep=1.3pt] (B) at (12,0) {};
\node[right] at (B) {$q$};
\node[left] at (A) {{$p$}};

\node[circle,draw,color=gray,fill=gray,inner sep=1.3pt] (A1) at (1,0) {};
\node[circle,draw,color=gray,fill=gray,inner sep=1.3pt] (A2) at (2,0) {};

\node[circle,draw,color=gray,fill=gray,inner sep=1.3pt] (B2) at (10,0) {};
\node[circle,draw,color=gray,fill=gray,inner sep=1.3pt] (B1) at (11,0) {};

 \draw[thick,{Stealth[color=red]}-{Stealth[color=blue]}] (B1) -- (B);
  \draw[thick,{Stealth[color=red]}-{Stealth[color=blue]}] (B2) -- (B1);
    \draw[thick,{Stealth[color=red]}-{Stealth[color=blue]}] (A) -- (A1);
\draw[thick,{Stealth[color=red]}-{Stealth[color=blue]}] (A1) -- (A2);

\node[above left] at (B) {\textcolor{blue}{\tiny{$b'_{i,2k+1}$}}};
\node[above left] at (B1) {\textcolor{blue}{\tiny{$b'_{i,2k}$}}};
\node[below right] at (B1) {\textcolor{red}{\tiny{$c'_{2k+1}$}}};
\node[below right] at (B2) {\textcolor{red}{\tiny{$c'_{2k}$}}};

\node[above left] at (A1) {\textcolor{blue}{\tiny{$b'_{i,1}$}}};
\node[above left] at (A2) {\textcolor{blue}{\tiny{$b'_{i,2}$}}};
\node[below right] at (A) {\textcolor{red}{\tiny{$c'_{i,1}$}}};
\node[below right] at (A1) {\textcolor{red}{\tiny{$c'_{i,2}$}}};

 \node[circle, draw,color=ForestGreen,fill=ForestGreen,inner sep=1.3pt] (A1) at (4,0) {};
\node[circle, draw,color=ForestGreen,fill=ForestGreen,inner sep=1.3pt] (B1) at (8,0) {};
\node[circle,draw,color=gray,fill=gray,inner sep=1.3pt] (A11) at (3,0) {};
\node[circle,draw,color=gray,fill=gray,inner sep=1.3pt] (B11) at (9,0) {};
 \draw[thick,{Stealth[color=red]}-{Stealth[color=blue]}] (A11) -- (A1);

   \draw[dashed,{Stealth[color=red]}-{Stealth[color=blue]}] (A1) -- (B1);
  
     \draw[thick,{Stealth[color=red]}-{Stealth[color=blue]}] (B1) -- (B11);
 \draw[dotted,{Stealth[color=red]}-{Stealth[color=blue]}] (A2) -- (A11);
 \draw[dotted,{Stealth[color=red]}-{Stealth[color=blue]}] (B11) -- (B2);
\node[above left] at (B11) {\textcolor{blue}{\tiny{$b'_{i,2m(2n+1)+1}$}}};
\node[above left] at (A1) {\textcolor{blue}{\tiny{$b'_{i,2n+1}$}}};
\node[below right] at (A11) {\textcolor{red}{\tiny{$c'_{i,2n+1}$}}};
\node[below right] at (B1) {\textcolor{red}{\tiny{$c'_{i,2m(2n+1)+1}$}}};

\end{tikzpicture}
\caption{
Edge $\{p,q\}$ subdivided in $\Gamma_k$.
\label{fig:gammamn2}}
\end{center}
\end{subfigure}
    \caption{Comparison of different subdivisions of an edge in $\Gamma$.}
    \label{fig:edgecomparison}
\end{figure}
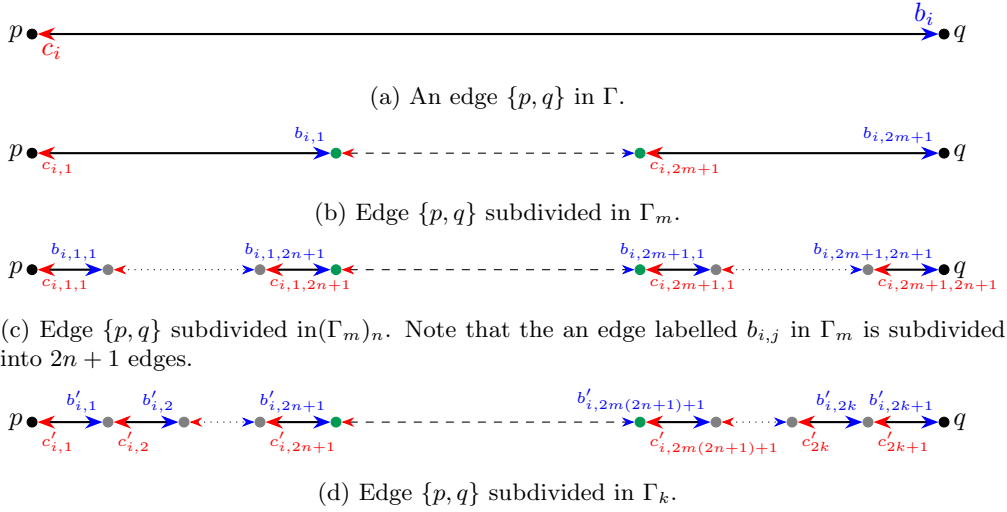

This leads to a
bijection $\phi: \Sigma(k)\to (\Sigma(m))(n)$ defined for $x=a,b,c$ as follows:
\[
\begin{matrix}
    x'_{i,1}\mapsto x_{i,1,1} & x'_{i,(2n+1)+1}\mapsto x_{i,2,1}  & ...&x'_{i,2m(2n+1)+1}\mapsto x_{i,2m+1,1} \\ 
     x'_{i,2}\mapsto x_{i,1,2}   &x'_{i,(2n+1)+2}\mapsto x_{i,2,2}  & ...  & x'_{i,2m(2n+1)+2}\mapsto x_{i,2m+1,2} \\ 
     \vdots &\vdots & &\vdots  \\
     x'_{i,2n+1}\mapsto x_{i,1,2n+1} &  x'_{i,2(2n+1)}\mapsto x_{i,2,2n+1}& ... & x'_{i,(2m+1)(2n+1)}\mapsto x_{i,2m+1,2n+1} \\
\end{matrix}
\]

One may verify that this bijection
 preserves rewriting rules, and hence induces a labelled graph isomorphism 
proving $\text{Cay}(\mathcal{G}^{\text{gen}}_n\circ \mathcal{G}^{\text{gen}}_m (G,\Sigma))\cong \text{Cay}(\mathcal{G}^{\text{gen}}_k(G,\Sigma))$.

A symmetric  argument shows that $\text{Cay}(\mathcal{G}^{\text{gen}}_m\circ \mathcal{G}^{\text{gen}}_n (G,\Sigma))\cong \text{Cay}(\mathcal{G}^{\text{gen}}_k(G,\Sigma))$.
\end{proof}

\begin{proposition}\label{prop:nablarespectsfreeproducts}
    Let $G=H\ast K$ with finite inverse-closed generating set $\Sigma_G=\Sigma_H\sqcup\Sigma_K$ such that $\Sigma_H$ is a generating set for $H$ and $\Sigma_K$ is a generating set for $K$. Then   \[\text{Cay}(\mathcal{G}^{\text{gen}}_n(G,\Sigma_G))\cong \text{Cay}(\mathcal{G}_n(H,\Sigma_H)\ast\mathcal{G}_n(K,\Sigma_K)),\Sigma_H(n)\sqcup\Sigma_K(n)).\]
\end{proposition}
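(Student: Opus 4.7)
The plan is to show that the rewriting system $\nabla_n(G,\Sigma_G)=(\Sigma_G(n),T_G(n))$ decomposes as a disjoint union of $\nabla_n(H,\Sigma_H)$ and $\nabla_n(K,\Sigma_K)$, in the sense that $\Sigma_G(n)=\Sigma_H(n)\sqcup\Sigma_K(n)$ and the rule set $T_G(n)$ splits into two parts, each supported entirely on one factor alphabet and coinciding with $T_H(n)$ or $T_K(n)$. The alphabet partition is immediate from \cref{defn:TheConstruction}, since the partition of $\Sigma_G$ into involutions and non-involutions is compatible with $\Sigma_G=\Sigma_H\sqcup\Sigma_K$. I would fix the total order on $\Sigma_G(n)$ so that it refines the chosen orders on $\Sigma_H(n)$ and $\Sigma_K(n)$ (for instance, placing every letter of $\Sigma_H(n)$ before every letter of $\Sigma_K(n)$), so that the slex notion restricts sensibly to each factor.

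The key geometric step is the standard tree-of-graphs description of $\Gamma:=\text{Cay}(G,\Sigma_G)$: it is a union of copies of $\text{Cay}(H,\Sigma_H)$ and $\text{Cay}(K,\Sigma_K)$, one per left coset of $H$ and $K$ respectively, where distinct copies share at most a single vertex and the overall incidence forms a tree. In particular every edge of $\Gamma$ lies in a unique such copy, and every embedded circuit is contained in a single copy (any attempt to leave and re-enter a copy would require traversing a shared vertex twice). Subdividing preserves this structure: $\Gamma_n$ is a tree of copies of $(\text{Cay}(H,\Sigma_H))_n$ and $(\text{Cay}(K,\Sigma_K))_n$ glued at the old vertices, and the labelling $L(n)$ restricts on each copy to the labelling inherited from the corresponding factor's subdivided Cayley graph. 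An embedded circuit in $\Gamma_n$ of length at least three contracts under the homeomorphism $f$ of \cref{subsec:subdiv} to an embedded circuit in $\Gamma$, and therefore already sat inside a single subdivided coset-copy.

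Consequently every rule in $T_G(n)$, whether of type $(\mathcal{R}_1)$, $(\mathcal{R}_2)$ or $(\mathcal{R}_3)$, has both sides labelled by letters from a single factor alphabet and corresponds bijectively to a rule in $T_H(n)$ or $T_K(n)$; hence $T_G(n)=T_H(n)\sqcup T_K(n)$. The associated group presentation then splits into relators supported on the disjoint generating sets $\Sigma_H(n)$ and $\Sigma_K(n)$, and combining \cref{lem:rewritingsystemsandpresentations} with \cref{lem:FreeProducts} yields an isomorphism $\mathcal{G}_n(G,\Sigma_G)\cong \mathcal{G}_n(H,\Sigma_H)\ast \mathcal{G}_n(K,\Sigma_K)$ that fixes the common generating set $\Sigma_H(n)\sqcup\Sigma_K(n)$ pointwise. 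This immediately produces the claimed labelled Cayley graph isomorphism. The main obstacle is formalising the tree-of-graphs decomposition and its passage to $\Gamma_n$; both facts are standard in combinatorial group theory but deserve a careful treatment, either via a direct argument using free-product normal forms or by appealing to Bass--Serre theory.
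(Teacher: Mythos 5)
Your proposal is correct and follows essentially the same route as the paper, which simply observes that $\Sigma_G(n)=\Sigma_H(n)\sqcup\Sigma_K(n)$ and that ``by inspection'' the rule set splits as $T_G(n)=T_H(n)\sqcup T_K(n)$, then concludes. The only difference is that you supply the justification the paper leaves implicit --- namely the tree-of-coset-copies decomposition of $\text{Cay}(H\ast K,\Sigma_H\sqcup\Sigma_K)$, which forces every embedded circuit (and hence every rule of each type) to live in a single factor --- together with the sensible remark about choosing the total order on $\Sigma_G(n)$ compatibly with the factors.
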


\begin{proof}
    The construction in \cref{sec:TheConstruction} treats each generator of $\Sigma_G=\Sigma_H\sqcup \Sigma_K$ individually to construct the new alphabet, so $\Sigma_G(n)=\Sigma_H(n)\sqcup\Sigma_K(n)$. Furthermore, by inspection of the types of rules in the new rewriting system, we observe $T_G(n)=\Sigma_H(n) \sqcup \Sigma_K(n)$. The labelled graph isomorphism between the two Cayley graphs follows.
\end{proof}

Note that several notions of a free product of graphs exist (see for example \cite{FreeProdGraphsNewcatle}), which in the case of vertex-transitive graphs coincide. Using 
 this,
\cref{prop:nablarespectsfreeproducts} could be restated as   \[\text{Cay}(\mathcal{G}^{\text{gen}}_n(G,\Sigma_G))= \text{Cay}(\mathcal{G}^{\text{gen}}_n(H,\Sigma_H))\ast \text{Cay}(\mathcal{G}^{\text{gen}}_n(K,\Sigma_K)).\] 

\begin{example}
    Let $F(X)$ be a free group on $X=\{x_1,...,x_k\}$ and $F(Y)$ be a free group on $Y=\{y_1,...,y_{r(2n+1)}\}$. By \cref{prop:nablarespectsfreeproducts}, we have $\text{Cay}(\mathcal{G}_n^{\text{gen}}(F(X),X\sqcup X^{-1}))\cong \text{Cay}(F(Y),Y\sqcup Y^{-1})$. 
\end{example}

\section{Conclusion and outlook}

We have presented a way to construct new presentations for free products which yields new geodetic generating sets with interesting properties.   While the construction does not furnish examples which can resolve 
the conjectures of Madlener and Otto, Shapiro, and Federici respectively,
they give further insight into these difficult open problems.  
Since the Cayley graphs we construct are a simple generalisation of trees, they might also find application in parallel computing in place of (or combined with) trees and hypercubes.  

The construction is based on subdivision of edges in an existing geodetic graph to construct a new geodetic graph
due to Parathasarathy and Srinvivasan  \cite{parthasarathy1982some}. There are several other ways to construct new geodetic graphs from existing ones, as in for example
\cite{frasser2020geodetic,plesnik1977two,plesnik1984construction,stemple1979geodetic}. However these methods  only apply to particular types of graphs such as complete graphs and  Moore graphs, and they create vertices of different degrees, so cannot be vertex transitive, so will not yield Cayley graphs.

We conclude with two further remarks.
When $n\neq 0$, $\mathcal{G}_n(G,\Sigma)$ is a group with generating set $\Sigma(n)$ whose elements are either order $2$ or infinity. If we were to begin with a finite group $G$ of odd order with geodetic generating set $\Sigma$, then $\mathcal{G}_n(G,\Sigma)\cong G\ast F_{n|\Sigma|}$ would be a group with geodetic generating set $\Sigma(n)$ that contains no finite order elements, but still contains finite subgroups. Following this idea of removing finite order generators, it is worth noting that by \cite[Lemma 3.11]{Elder07052025}, any order $2$ element of a geodetic group $G$ must have a conjugate contained in any geodetic generating set for $G$, and so if the group has order $2$ elements a geodetic generating set must contain order $2$ elements. 

  Finally,  several authors have considered 
  the following generalisation of geodetic where $k\in \mathbb{N}_+$ \cite{elder2023kgeodeticwithjournal,bigeodetic}. If a graph has at most $k$ geodesics between each pair of vertices, then we call a graph \emph{$k$-geodetic}. A group is called $k$-geodetic if there exists an inverse-closed generating set such that Cayley graph is $k$-geodetic. See \cite{elder2023kgeodeticwithjournal} for some results on this topic. Suppose $k>1$ and that $G$ is group with $k$-geodetic generating set $\Sigma$ that is not a geodetic generating set. It is not hard to see that for all $n>1$ this $\text{Cay}(\mathcal{G}_n^{\text{gen}}(G,\Sigma))$ is not $k$-geodetic for any $k\in \mathbb{N}_+$. Note that, to date, the only known examples of $k$-geodetic groups that are not geodetic are finite.

\bibliographystyle{plain}
\bibliography{refs}
\end{document}